\documentclass[article, 11pt]{amsart}

\usepackage{amsmath, amssymb, amsfonts, amsthm, latexsym}
\usepackage{tikz}

\textwidth=15.5truecm \textheight= 22 truecm
\oddsidemargin=1.1cm
\evensidemargin=1.1cm
\topmargin = -0.4cm
\footskip = 1cm
\parskip = 0.1cm
\pagestyle{plain}

\newtheorem{Theorem}{Theorem}[section]
\newtheorem{Lemma}[Theorem]{Lemma}
\newtheorem{Corollary}[Theorem]{Corollary}
\newtheorem{Proposition}[Theorem]{Proposition}
\newtheorem{Remark}[Theorem]{Remark}
\newtheorem{Example}[Theorem]{Example}

\newcommand{\Q}{\mathbb{Q}}

\newcommand{\Z}{\mathbb{Z}}

\def\depth{\operatorname{depth}}

\def\dim{\operatorname{dim}}

\def\lim{\operatorname{lim}}

\def\mm{{\frak m}}

\def\qq{{\frak q}}


\begin{document}

\title{The second Hilbert coefficient of modules with almost maximal depth}

\author{Van Duc Trung}
\address{Department of Mathematics, University of Genoa, Via Dodecaneso 35, 16146 Genoa, Italy}
\email{vanductrung@dima.unige.it}

\keywords{Local rings, filtration, Hilbert coefficients, associated graded ring }
\subjclass[2010]{Primary: 13H10; Secondary: 13H15}

\begin{abstract}
Let $\mathbb{M} = \{ M_n \}$ be a good $\qq$-filtration of a finitely generated $R$-module $M$ of dimension $d$, where $(R,\mm)$ is a local ring and $\qq$ is an $\mm$-primary ideal of $R$. In case $\depth M \geq d-1$, we give an upper bound for the second Hilbert coefficient $e_2(\mathbb{M})$ generalizing results by Huckaba-Marley and Rossi-Valla proved assuming that $M$ is Cohen-Macaulay. We also give a condition for the equality, which relates to the depth of the associated graded module $gr_{\mathbb{M}}(M)$. A lower bound on $e_2(\mathbb{M})$ is proved generalizing a result by Rees and Narita.
\end{abstract}

\maketitle

\section{Introduction}
Let $(R,\mm)$ be a Noetherian local ring and $M$ a finitely generated $R$-module of dimension $d$. Let $\qq$ be an $\mm$-primary ideal of $R$, we consider $\mathbb{M} = \{ M_n\}$ a $\qq$-filtration of $M$ as follows
$$\mathbb{M}: \quad M = M_0 \supseteq M_1 \supseteq \cdots \supseteq M_n \supseteq M_{n+1} \supseteq \cdots$$
where $M_n$ are submodules of $M$ and $\qq M_n \subseteq M_{n+1}$ for all $n \geq 0$. The $\qq$-filtration $\mathbb{M}$ is called a good $\qq$-filtration if $\qq M_n = M_{n+1}$ for all sufficiently large $n$.

The algebraic and geometric properties of $M$ can be detected by the Hilbert function of a good $\qq$-filtration $\mathbb{M}$, namely $H_{\mathbb{M}}(n) = \lambda(M_n/M_{n+1})$, where $\lambda(-)$ denotes the length as $R$-module. It is well known that there exist the integers $e_i(\mathbb{M})$ for $i=0,1,\cdots,d$ such that for $n \gg 0$
$$H_{\mathbb{M}}(n) = e_0(\mathbb{M})\binom{n+d-1}{d-1} - e_1(\mathbb{M})\binom{n+d-2}{d-2} + \cdots + (-1)^{d-1}e_{d-1}(\mathbb{M}).$$
The integers $e_i(\mathbb{M})$ are called the Hilbert coefficients of $\mathbb{M}$. In particular $e = e_0(\mathbb{M})$ is the multiplicity and it depends only on $M$ and $\qq$.

A rich literature has been produced on the Hilbert coefficients of a filtered module $M$ in the case $M$ is Cohen-Macaulay, for a survey see for instance \cite{RV1}. The first Hilbert coefficient $e_1(\mathbb{M})$ is called Chern number by W.V. Vasconcelos and has been studied very deeply by several authors (see for instance \cite{CZ}, \cite{H}, \cite{HM}, \cite{No}, \cite{P} and \cite{RV2}).

Let $J$ be an ideal of $R$ generated by a maximal $\mathbb{M}$-superficial sequence for $\qq$. If $M$ is Cohen-Macaulay then the following inequalities hold
$$0 \leq e_0(\mathbb{M}) - \lambda(M/M_1) \leq e_1(\mathbb{M}) \leq  \underset{n \geq 0}{\sum} \lambda(M_{n+1}/JM_n)$$
(see \cite{H} and \cite{HM}) and the equalities provide good homological properties of the associated graded module $gr_{\mathbb{M}}(M) = \underset{n \geq 0}{\bigoplus}\ M_n/M_{n+1}$. Concerning the second Hilbert coefficient $e_2(\mathbb{M})$, if $M$ is Cohen-Macaulay then
$$0 \leq e_2(\mathbb{M}) \leq  \underset{n \geq 1}{\sum} n\lambda(M_{n+1}/JM_n) \quad \quad (*)$$
(see \cite{HM}, \cite{N} and \cite{RV1}). If $M$ is no longer Cohen-Macaulay then new tools are necessary. The first Hilbert coefficient $e_1(\mathbb{M})$ was studied by Goto-Nishida in \cite{GN} and by Rossi-Valla in \cite{RV2}. We have the following inequalities
$$e_0(\mathbb{M}) - \lambda(M/M_1) \leq e_1(\mathbb{M}) - e_1(\mathbb{N}) \leq \underset{n \geq 0}{\sum} \lambda(M_{n+1}/JM_n)$$
where $\mathbb{N} = \{ J^nM\}$ is the $J$-adic filtration on $M$.

Little is known about $e_2(\mathbb{M})$. In \cite{LM} Mccune proved that if $\depth R \geq d-1$ and $\qq$ is a parameter ideal of $R$, then the second Hilbert coefficient of the $\qq$-adic filtration $\{\qq^n\}$ on $R$ is non-positive. In this paper we extend the inequalities $(*)$ in the case $M$ has almost maximal depth and we recover Mccune's result. More precisely, we prove the following main results.
\vskip 0.2cm
\noindent \textbf{Theorem 1.} \emph{Let $\mathbb{M} = \{ M_n \}$ be a good $\qq$-filtration of $R$-module $M$ of dimension $d \geq 2$ and $\depth M \geq d-1$. Suppose $J=(a_1,\cdots,a_d)$ is an ideal of $R$ generated by a maximal $\mathbb{M}$-superficial sequence for $\qq$. For each $i=1,\cdots,d-1$, denote the ideal $J_i = (a_1,\cdots,a_{d-i})$ of $R$. Then, we have
$$e_2(\mathbb{M}) \leq \underset{n \geq 1}{\sum} n  \lambda(M_{n+1}/JM_n).$$
Further, the equality holds if and only if $\depth gr_{\mathbb{M}}(M) \geq d-1$ and $(J_1M :_M a_d) \cap M_1 = J_1M$.}
\vskip 0.2cm
Example \ref{not almost} shows that the assumption on the depth of $M$ can not be weakened in the above result. Further if we consider $M = R$ and $\mathbb{N} = \{ J^n \}$ is the $J$-adic filtration of $R$, then our result implies the non-positivity of $e_2(\mathbb{N})$ which was proven by Mccune as above mentioned.

If $M$ is Cohen-Macaulay, then $e_2(\mathbb{N}) = 0$ and $gr_{\mathbb{N}}(M)$ is Cohen-Macaulay too. Under the assumption that the $gr_{\mathbb{N}}(M)$ has almost maximal depth, we may strengthen Theorem 1.
\vskip 0.2cm
\noindent \textbf{Theorem 2.} \emph{Let $\mathbb{M} = \{ M_n \}$ be a good $\qq$-filtration of $R$-module $M$ of dimension $d \geq 2$. Suppose $J$ is an ideal of $R$ generated by a maximal $\mathbb{M}$-superficial sequence for $\qq$ such that $\depth gr_{\mathbb{N}}(M) \geq d-1$, where $\mathbb{N}$ is the $J$-adic filtration. Then, we have
$$e_2(\mathbb{M}) - e_2(\mathbb{N}) \leq \underset{n \geq 1}{\sum} n  \lambda(M_{n+1}/JM_n).$$}

A lower bound for $e_2(\mathbb{M})$ is also given and it extends the result by Rees-Narita on the non-negativity of the second Hilbert coefficient proved in the Cohen-Macaulay case.
\vskip 0.2cm
\noindent \textbf{Theorem 3.} \emph{Let $\mathbb{M} = \{ M_n \}$ be a good $\qq$-filtration of $R$-module $M$ of dimension two and $\depth M > 0$. Suppose $J=(a_1,a_2)$ is an ideal of $R$ generated by a maximal $\mathbb{M}$-superficial sequence for $\qq$. Then, we have
$$e_2(\mathbb{M}) \geq -\binom{s+2}{2}\lambda\left(\frac{a_1M:a_2}{a_1M}\right),$$
where $s$ is the postulation number of the Ratliff-Rush filtration associated to $\mathbb{M}$.}

\section{Preliminaries}
Let $(R, \mm)$ be a Noetherian local ring with infinite residue field $R/\mm$ and $M$ a finitely generated $R$-module of dimension $d$. Let $\qq$ be an $\mm$-primary ideal of $R$ and $\mathbb{M} = \{ M_n \}$ a good $\qq$-filtration of $M$. Notice that if $N$ is a submodule of $M$ then $\mathbb{M}/N := \{ (M_n + N)/N \}$ is  a good $\qq$-filtration of the quotient module $M/N$.
\vskip 0.2cm
The Hilbert function of the associated graded module $gr_{\mathbb{M}}(M)$ is called the Hilbert function of filtration $\mathbb{M}$, by definition it is given by
$$H_{\mathbb{M}}(n) := \lambda(M_n/M_{n+1}) \ \text{for all} \ n \geq 0,$$
where $\lambda(-)$ denotes length. The Hilbert series of  filtration $\mathbb{M}$ is the power series
$$P_{\mathbb{M}}(t) := \underset{n \geq 0}{\sum} H_{\mathbb{M}}(n) t^n.$$

By Hilbert-Serre theorem, we can write
$$P_{\mathbb{M}}(t) = \frac{h_{\mathbb{M}}(t)}{(1-t)^d},$$
where $h_{\mathbb{M}}(t) = h_0(\mathbb{M}) + h_1(\mathbb{M})t + \cdots + h_s(\mathbb{M})t^s \in \Z[t]$ with $h_{\mathbb{M}}(1) \neq 0$, and $h_{\mathbb{M}}(t)$ is called the $h$-polynomial of $\mathbb{M}$. For every $i \geq 0$, we define
$$e_i(\mathbb{M}) := \frac{h_{\mathbb{M}}^{(i)}(1)}{i!},$$
and we notice that for $i = 0, 1, \cdots, d$ they coincide with the Hilbert coefficients already introduced. The polynomial
$$p_{\mathbb{M}}(X) = \sum_{i = 0}^{d-1} (-1)^i e_i(\mathbb{M})\binom{X+d -i - 1 }{d-i-1} \in \Q[X]$$
of degree $d-1$ is called the Hilbert polynomial of filtration $\mathbb{M}$ and the largest integer $n$ such that $H_{\mathbb{M}}(n)$ and $p_{\mathbb{M}}(n)$ disagree is called the \emph{postulation number} of filtration $\mathbb{M}$, denoted by $s(\mathbb{M})$.
\begin{Remark} \emph{By definition, we can write
$$e_i(\mathbb{M}) = \underset{k \geq i}{\sum} \binom{k}{i} h_k(\mathbb{M}),$$
where $h_k(\mathbb{M}) = 0$ for every $k \geq s$. Notice that $h_k(\mathbb{M})$ are not necessarily non-negative.}
\end{Remark}
The notion of superficial element plays a very important role in the study of Hilbert coefficients. We refer to \cite{RV1} for the definition and many interesting properties of superficial elements.  We only present here some basic properties that we use in this paper.
\begin{Proposition}\label{induction} \cite[Proposition 1.2]{RV1}.
Let $a \in \qq \setminus \qq^2$ be an $\mathbb{M}$-superficial element for $\qq$ and $d = \dim(M) \geq 1$. Then we have:
\vskip 0.2cm
\noindent {\rm (i)} $\dim(M/aM) = d-1$.
\vskip 0.2cm
\noindent {\rm (ii)} $e_i(\mathbb{M}/aM) = e_i(\mathbb{M})$  for $i = 0, 1, \cdots , d-2$.
\vskip 0.2cm
\noindent {\rm (iii)} $e_{d-1}(\mathbb{M}/aM) = e_{d-1}(\mathbb{M}) + (-1)^{d-1}\lambda(0 : a)$.
\vskip 0.2cm
\noindent {\rm (iv)} There exists an integer $j$ such that for every $n \geq j-1$ we have
$$e_d(\mathbb{M}/aM) = e_d(\mathbb{M}) + (-1)^d(\sum_{i = 0}^{n} \lambda(M_{i+1} : a/M_i) - (n+1)\lambda(0 : a)).$$
\noindent {\rm (v)} Denote $a^*$ the natural image of $a$ in $gr_{\qq}(R)$. Then $a^*$ is a regular element on $gr_{\mathbb{M}}(M)$ if and only if $a$ is $M$-regular and $e_d(\mathbb{M}/aM) = e_d(\mathbb{M})$.
\end{Proposition}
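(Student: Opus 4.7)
The plan is to derive everything from a single identity of Hilbert series linking $\mathbb{M}$ and $\mathbb{M}/aM$, and then read off each part by expanding at $t=1$. The starting point is the four-term exact sequence
\[
0 \to (M_{n+1}:_M a)/M_n \to M/M_n \xrightarrow{\ \cdot a\ } M/M_{n+1} \to M/(M_{n+1}+aM) \to 0;
\]
taking alternating lengths and telescoping in $n$ produces
\[
P_{\mathbb{M}/aM}(t) = (1-t)\,P_{\mathbb{M}}(t) + (1-t)\sum_{n\ge 0} c_n\, t^n, \qquad c_n := \lambda\bigl((M_{n+1}:_M a)/M_n\bigr).
\]

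Since $a$ is $\mathbb{M}$-superficial, there exists an index $j$ such that $c_n = \lambda(0:_M a)$ for every $n\ge j$, so $\sum_{n\ge 0} c_n t^n = F(t) + \lambda(0:_M a)/(1-t)$ with $F(t) := \sum_{n\ge 0}(c_n - \lambda(0:_M a))\,t^n \in \Z[t]$. Part (i) is immediate and separate: a superficial element of $\qq \setminus \qq^2$ is a parameter on $M$ because $\qq$ is $\mm$-primary, so $\dim(M/aM) = d-1$. Knowing this, Hilbert--Serre applied to both sides rearranges the previous identity into
\[
h_{\mathbb{M}/aM}(t) = h_{\mathbb{M}}(t) + (1-t)^d\, F(t) + (1-t)^{d-1}\lambda(0:_M a).
\]

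Parts (ii)--(iv) are then read off by differentiating $i$ times at $t=1$ and dividing by $i!$: for $i\le d-2$ both correction terms vanish to order at least $d-1$, giving (ii); for $i=d-1$ only $(1-t)^{d-1}\lambda(0:_M a)$ survives, producing the $(-1)^{d-1}\lambda(0:_M a)$ shift of (iii); for $i=d$ only $(1-t)^d F(t)$ contributes $(-1)^d F(1)$, and rewriting $F(1) = \sum_{i=0}^n c_i - (n+1)\lambda(0:_M a)$ for any $n\ge j-1$ yields (iv). For (v), regularity of $a^*$ on $gr_{\mathbb{M}}(M)$ says $M_n \cap (M_{n+2}:_M a) = M_{n+1}$ for every $n$; a short induction on $n$ upgrades this to $(M_{n+1}:_M a) = M_n$, i.e.\ $c_n = 0$ for all $n$, and hence $(0:_M a)\subseteq \bigcap_n M_n = 0$ by Krull's intersection theorem, so (iv) forces $e_d(\mathbb{M}/aM) = e_d(\mathbb{M})$. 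Conversely, if $a$ is $M$-regular and $e_d(\mathbb{M}/aM) = e_d(\mathbb{M})$, then $\lambda(0:_M a) = 0$ and (iv) reduces to $\sum_{i=0}^n c_i = 0$; nonnegativity of each $c_i$ then forces them all to vanish, which is precisely regularity of $a^*$ on $gr_{\mathbb{M}}(M)$.

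The step I expect to be most delicate is the passage from the qualitative stabilization $c_n = \lambda(0:_M a)$ for $n\gg 0$ to the explicit identity $F(1)=\sum_{i=0}^n c_i - (n+1)\lambda(0:_M a)$ valid \emph{for every} $n\ge j-1$, since one must pin down the precise stabilization index $j$ through the superficiality hypothesis rather than invoke it merely asymptotically.
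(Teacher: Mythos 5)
The paper offers no proof of this proposition --- it is quoted verbatim as a citation of Proposition~1.2 of [RV1] --- so there is no internal argument to compare against; your proof is correct and is essentially the standard one from that source: the four-term exact sequence $0 \to (M_{n+1}:_M a)/M_n \to M/M_n \to M/M_{n+1} \to M/(M_{n+1}+aM) \to 0$, the resulting series identity, and extraction of $e_i$ by differentiating the $h$-polynomial identity at $t=1$. Two spots deserve a word, since they are exactly where superficiality enters. First, the stabilization $c_n=\lambda(0:_M a)$ for $n\gg 0$ is not the defining condition of a superficial element but a consequence of it (one needs $(M_{n+1}:_M a)=M_n+(0:_M a)$ and $M_n\cap(0:_M a)=0$ for large $n$, which follow from the intersection condition together with Artin--Rees); you assert it without proof, which is acceptable for a quoted standard fact but should be flagged. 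Second, your justification of (i) --- ``a superficial element of $\qq\setminus\qq^2$ is a parameter because $\qq$ is $\mm$-primary'' --- is too quick as stated, since membership in an $\mm$-primary ideal alone does not prevent $a$ from lying in a minimal prime of $\Supp M$ of maximal dimension; but (i) actually falls out of your own identity, because it exhibits $P_{\mathbb{M}/aM}(t)$ as $\bigl(h_{\mathbb{M}}(t)+(1-t)^dF(t)+(1-t)^{d-1}\lambda(0:_M a)\bigr)/(1-t)^{d-1}$ with numerator equal to $h_{\mathbb{M}}(1)=e_0(\mathbb{M})\neq 0$ at $t=1$ when $d\geq 2$ (and a polynomial series when $d=1$), so the pole order, hence $\dim(M/aM)$, is exactly $d-1$. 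The remaining steps --- the Leibniz computations for (ii)--(iv), the induction showing that regularity of $a^*$ forces $(M_{n+1}:_M a)=M_n$ for all $n$ and hence $(0:_M a)\subseteq\bigcap_n M_n=0$ by Krull intersection, and the converse via nonnegativity of the $c_i$ --- are all correct, and your worry about pinning down the index $j$ is unfounded: once $c_n$ stabilizes, any $j$ past the stabilization point works, which is all (iv) asserts.
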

In the following proposition, we collect some important properties on the $\mathbb{M}$-superficial sequence for $\qq$ that are very useful to study the depth of $M$ and $gr_{\mathbb{M}}(M)$.
\begin{Proposition} \label{Prop depth} Let $a_1,\cdots,a_r$ be an $\mathbb{M}$-superficial sequence for $\qq$ and $I$ the ideal they generate. Then we have:
\vskip 0.2cm
\noindent {\rm (i)} $a_1,\cdots,a_r$ is a regular sequence on $M$ if and only if $\depth M \geq r$.
\vskip 0.2cm
\noindent {\rm (ii)} $a_1^*,\cdots,a_r^*$ is a regular sequence on $gr_{\mathbb{M}}(M)$ if and only if $\depth gr_{\mathbb{M}}(M) \geq r$.
\vskip 0.2cm
\noindent {\rm (iii)} (Valabrega-Valla) $a_1^*,\cdots,a_r^*$ is a regular sequence on $gr_{\mathbb{M}}(M)$ if and only if $a_1,\cdots,a_r$ is a regular sequence on $M$ and $IM \cap M_{n+1} = IM_n$ for every $n \geq 1$.
\vskip 0.2cm
\noindent {\rm (iv)} (Sally's machine) $\depth gr_{\mathbb{M}/IM}(M/IM) \geq 1$ if and only if $\depth gr_{\mathbb{M}}(M) \geq r+1.$
\end{Proposition}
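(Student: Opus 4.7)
The plan is to deduce the four items from the machinery of superficial elements combined with a short induction on $r$, leaning on the preceding Proposition \ref{induction} for the one-element case and invoking Valabrega--Valla and Sally's machine for items (iii) and (iv), both of which are classical.

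For (i), I would induct on $r$. The base case follows because an $\mathbb{M}$-superficial element $a$ for $\qq$ avoids every associated prime of $M$ other than (possibly) $\mm$, so once $\depth M \geq 1$, the element $a$ must be $M$-regular. For the inductive step, having established that $a_1$ is $M$-regular, I pass to the filtration $\mathbb{M}/a_1M$ on $M/a_1M$, whose depth is at least $r-1$, and note that the residues of $a_2, \dots, a_r$ form an $\mathbb{M}/a_1M$-superficial sequence for $\qq$. For (ii) the scheme is parallel but carried out in the graded world: a single superficial element $a$ yields $a^*$ whose annihilator in $gr_{\mathbb{M}}(M)$ has finite length, by the definition of a superficial element together with the Artin--Rees lemma, so the hypothesis $\depth gr_{\mathbb{M}}(M) \geq 1$ eliminates that annihilator and makes $a^*$ a nonzerodivisor. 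The inductive step passes from $gr_{\mathbb{M}}(M)$ to $gr_{\mathbb{M}/a_1M}(M/a_1M)$, which is exactly the point at which Sally's machine (iv) is needed.

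For (iii), one direction is immediate: if $a_1^*, \dots, a_r^*$ form a regular sequence on $gr_{\mathbb{M}}(M)$, then restricting to degree zero shows that $a_1, \dots, a_r$ is $M$-regular, and the equality $IM \cap M_{n+1} = IM_n$ encodes exactly the absence of lower-degree relations among the $a_i^*$. The converse is the content of Valabrega--Valla and is proved by induction on $r$ via a careful lifting of degree relations. For (iv), Sally's machine is extracted from a short exact sequence of filtered modules combined with a local cohomology chase that transfers a positive depth on $gr_{\mathbb{M}/IM}(M/IM)$ into one extra unit of depth on $gr_{\mathbb{M}}(M)$, once a complete $\mathbb{M}$-superficial sequence has been quotiented out.

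The main obstacle I anticipate is the interdependence of (ii) and (iv): the quotient filtration $\mathbb{M}/a_1M$ does not automatically remain well-behaved without Sally's machine, so these two items must essentially be proved in tandem rather than independently. Once that mutual induction is set up correctly, items (i) and (iii) slot in as standard consequences of the superficial-element formalism, and the full proposition follows from precise citations to \cite{RV1}.
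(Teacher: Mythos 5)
The paper offers no proof of this proposition at all: it is presented as a collection of classical facts about superficial sequences, with the reader referred to \cite{RV1} (and, implicitly, to the original Valabrega--Valla and Sally's-machine literature) for the arguments. Your proposal therefore does more than the paper by sketching proofs of (i) and (ii), and those sketches follow the standard route: a superficial element avoids every associated prime of $M$ except possibly $\mm$ (resp.\ its initial form has finite-length annihilator in $gr_{\mathbb{M}}(M)$), so positive depth upgrades filter-regularity to regularity, and one inducts by passing to the quotient filtration $\mathbb{M}/a_1M$. For (iii) and (iv) you, like the paper, ultimately defer to citation, which is reasonable since both are substantial theorems in their own right.

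Two points need correcting. First, the inductive step of (ii) does not require Sally's machine, and there is no genuine mutual induction between (ii) and (iv). Once $a_1^*$ is a nonzerodivisor on $gr_{\mathbb{M}}(M)$, the Valabrega--Valla equalities $a_1M \cap M_n = a_1M_{n-1}$ give the canonical isomorphism $gr_{\mathbb{M}}(M)/a_1^*\,gr_{\mathbb{M}}(M) \cong gr_{\mathbb{M}/a_1M}(M/a_1M)$, hence $\depth gr_{\mathbb{M}/a_1M}(M/a_1M) = \depth gr_{\mathbb{M}}(M) - 1 \geq r-1$, and the induction closes with no further input. Sally's machine is the strictly stronger converse assertion that positive depth of $gr_{\mathbb{M}/IM}(M/IM)$, where one quotients by the elements $a_i$ themselves rather than their initial forms, lifts to $\depth gr_{\mathbb{M}}(M) \geq r+1$; it needs its own local cohomology argument and is not consumed by (ii). As written, your claimed ``tandem'' proof would be circular if (iv) were in turn derived from (ii), so it is worth insisting that (ii) stands alone. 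Second, in (iii) the implication from $a_1^*,\dots,a_r^*$ regular on $gr_{\mathbb{M}}(M)$ to $a_1,\dots,a_r$ regular on $M$ is not obtained by ``restricting to degree zero'': given $0 \neq m \in M$ with $a_1m=0$, Krull's intersection theorem places $m$ in some $M_n \setminus M_{n+1}$, and then $a_1^*m^* = 0$ in degree $n+1$ contradicts the regularity of $a_1^*$. These are presentational slips rather than fatal gaps; with them repaired, your outline is a faithful account of how the cited results are established in \cite{RV1}.
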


Under our assumptions, always there exists an $\mathbb{M}$-superficial sequence $a_1, \cdots, a_d$ for $\qq$. In this case we say $a_1,\cdots,a_d$ is a maximal $\mathbb{M}$-superficial sequence for $\qq$.

In the end of this section we would like to recall some results in the one-dimensional case which is studied in  \cite[Sect. 2.2]{RV1}. By induction, these results are very useful to consider the higher dimension cases.

Let $M$ be an $R$-module of dimension one and $\mathbb{M} = \{ M_n \}$ a good $\qq$-filtration. For every $n \geq 0$ define
$$u_n(\mathbb{M}) := e_0(\mathbb{M}) - H_{\mathbb{M}}(n).$$
\vskip 0.2cm
\noindent Notice that $\dim M = 1$ implies $u_n(\mathbb{M}) = 0$ for $n \gg 0$. Moreover we have the following.
\begin{Lemma}\cite[Lemma 2.1]{RV1}\label{Form un}
Let $M$ be an $R$-module of dimension one. If $a$ is an $\mathbb{M}$-superficial element for $\qq$, then for every $n \geq 0$ we have
$$u_n(\mathbb{M}) = \lambda(M_{n+1}/aM_n) - \lambda(0 :_{M_n} a).$$
\end{Lemma}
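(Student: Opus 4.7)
The plan is to derive the identity by expressing $\lambda(M/aM_n)$ in two different ways and comparing them.

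As preparation, I would first extract a closed form for $e_0(\mathbb{M})$. By Proposition~\ref{induction}(i) we have $\dim(M/aM) = 0$, so $M/aM$ has finite length and the Hilbert series of the induced filtration $\mathbb{M}/aM$ collapses to a polynomial; hence $e_0(\mathbb{M}/aM) = \sum_{n \geq 0} H_{\mathbb{M}/aM}(n) = \lambda(M/aM)$. Combining this with Proposition~\ref{induction}(iii) applied in dimension $d=1$ yields
$$e_0(\mathbb{M}) = \lambda(M/aM) - \lambda(0:_M a).$$

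Next, I would analyze the map $\bar{a}: M/M_n \to M/aM_n$ induced by multiplication by $a$, which is well-defined since $aM_n \subseteq aM_n$. A short computation gives $(aM_n :_M a) = M_n + (0:_M a)$: the inclusion $\supseteq$ is immediate, while for $\subseteq$, if $ax \in aM_n$ we pick $y \in M_n$ with $ax = ay$, so $x - y \in 0:_M a$. Therefore $\ke \bar{a} \cong (0:_M a)/(0:_{M_n} a)$ and $\imm \bar{a} = aM/aM_n$. Since $\qq^n M \subseteq M_n$ for any good $\qq$-filtration and $M/aM$ has finite length, every module in question is of finite length, and additivity on $0 \to \ke \bar{a} \to M/M_n \to aM/aM_n \to 0$ produces
$$\lambda(aM/aM_n) = \lambda(M/M_n) - \lambda(0:_M a) + \lambda(0:_{M_n} a).$$

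To finish, I would compute $\lambda(M/aM_n)$ along the two short exact sequences
$$0 \to aM/aM_n \to M/aM_n \to M/aM \to 0, \qquad 0 \to M_{n+1}/aM_n \to M/aM_n \to M/M_{n+1} \to 0.$$
The first, combined with the previous two formulas, gives $\lambda(M/aM_n) = e_0(\mathbb{M}) + \lambda(M/M_n) + \lambda(0:_{M_n} a)$. Equating this with the decomposition coming from the second sequence and using $\lambda(M/M_{n+1}) - \lambda(M/M_n) = H_\mathbb{M}(n)$ rearranges directly to the desired identity.

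The only subtle step is the kernel identification $(aM_n :_M a) = M_n + (0:_M a)$; everything else is routine bookkeeping with short exact sequences, and because each module involved is verified to have finite length there are no issues with additivity of $\lambda$.
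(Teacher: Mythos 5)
The paper does not prove this lemma; it is quoted from \cite[Lemma 2.1]{RV1}, so there is no internal proof to compare against. Your argument is correct and complete: the identity $e_0(\mathbb{M})=\lambda(M/aM)-\lambda(0:_M a)$ obtained from Proposition \ref{induction}(iii) in dimension one, the kernel identification $(aM_n:_M a)=M_n+(0:_M a)$, and the two decompositions of $\lambda(M/aM_n)$ (all of the relevant modules are indeed of finite length, as you check) assemble into a valid standalone proof, essentially the standard one found in \cite{RV1}.
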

The interesting point is that we can write down the Hilbert coefficients in terms of the integers $u_n(\mathbb{M})$.
\begin{Lemma}\cite[Lemma 2.2]{RV1}\label{Form e}
Let $M$ be an $R$-module of dimension one. Then for every $i \geq 0$ we have
$$e_i(\mathbb{M}) = \underset{n \geq i-1}{\sum} \binom{n}{i-1} u_n(\mathbb{M}).$$
\end{Lemma}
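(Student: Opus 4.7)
The natural tool is the generating function of the integers $u_n(\mathbb{M})$. The plan is first to identify $U(t) := \sum_{n \geq 0} u_n(\mathbb{M}) t^n$ as a polynomial closely related to the $h$-polynomial of $\mathbb{M}$, and then to extract $e_i(\mathbb{M})$ by differentiating at $t = 1$ and matching the claimed sum formula.

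First I would observe that $\dim M = 1$ forces $u_n(\mathbb{M}) = 0$ for $n \gg 0$, so $U(t)$ is in fact a polynomial. From the definition $u_n(\mathbb{M}) = e_0(\mathbb{M}) - H_{\mathbb{M}}(n)$ and the Hilbert--Serre form $P_{\mathbb{M}}(t) = h_{\mathbb{M}}(t)/(1-t)$, splitting the two contributions as a geometric series and a rational function yields
$$U(t) \;=\; \frac{e_0(\mathbb{M})}{1-t} - \frac{h_{\mathbb{M}}(t)}{1-t} \;=\; \frac{e_0(\mathbb{M}) - h_{\mathbb{M}}(t)}{1-t}.$$
The right-hand side is a genuine polynomial precisely because $h_{\mathbb{M}}(1) = e_0(\mathbb{M})$ (by the remark following the definition of the $e_i$), so $(1-t)$ divides $e_0(\mathbb{M}) - h_{\mathbb{M}}(t)$. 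Rearranging gives the key identity
$$h_{\mathbb{M}}(t) \;=\; e_0(\mathbb{M}) - (1-t)\, U(t).$$

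Next I would differentiate this identity $i$ times for $i \geq 1$. Applying Leibniz's rule to the product $(1-t)U(t)$ and using that $1-t$ is linear, only the $k=0$ and $k=1$ terms survive:
$$\frac{d^i}{dt^i}\bigl[(1-t)\,U(t)\bigr] \;=\; (1-t)\, U^{(i)}(t) \;-\; i\, U^{(i-1)}(t).$$
Evaluating at $t = 1$ kills the first summand, so $h_{\mathbb{M}}^{(i)}(1) = i\, U^{(i-1)}(1)$. Dividing by $i!$ converts the left-hand side into $e_i(\mathbb{M})$ by definition, while expanding $U^{(i-1)}(t)$ term-by-term from $U(t) = \sum_{n} u_n(\mathbb{M}) t^n$ produces
$$\frac{U^{(i-1)}(1)}{(i-1)!} \;=\; \sum_{n \geq i-1} \binom{n}{i-1} u_n(\mathbb{M}),$$
which is exactly the claimed formula. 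The boundary case $i = 0$ reduces to the tautology $e_0(\mathbb{M}) = e_0(\mathbb{M})$, either read off from the convention $\binom{-1}{-1}=1$ with $u_{-1}(\mathbb{M}) := e_0(\mathbb{M})$, or treated as a separate trivial case.

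There is no serious obstacle; the entire argument is a short generating-function manipulation. The only mild subtlety is the conventional handling of the $i = 0$ case, which is a matter of bookkeeping rather than substance.
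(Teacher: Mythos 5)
The paper does not prove this lemma itself --- it is quoted from \cite[Lemma 2.2]{RV1} --- so there is no internal proof to compare against; your argument is correct and is essentially the standard one: the identity $h_{\mathbb{M}}(t)=e_0(\mathbb{M})-(1-t)U(t)$ (valid because $h_{\mathbb{M}}(1)=e_0(\mathbb{M})$ and $u_n(\mathbb{M})=0$ for $n\gg0$), followed by Leibniz differentiation at $t=1$. The only bookkeeping point, which you correctly flag, is that for $i=0$ the displayed sum must be read with the convention $\binom{n}{-1}=0$ for $n\geq 0$ (the statement is really meaningful for $i\geq 1$, with $e_0$ handled separately).
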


\section{The two-dimensional case}
In this section we consider the upper bound for the second Hilbert coefficient in the case $M$ is an $R$-module of dimension two and the depth of $M$ is positive. First we have the following result.
\begin{Theorem} \label{dim 2} Let $\mathbb{M} = \{ M_n \}$ be a good $\qq$-filtration of $R$-module $M$ of dimension two and $\depth M > 0$. Suppose $J=(a_1,a_2)$ is an ideal of $R$ generated by a maximal $\mathbb{M}$-superficial sequence for $\qq$. Then, we have
$$e_2(\mathbb{M}) \leq \underset{n \geq 1}{\sum} n  \lambda(M_{n+1}/JM_n).$$
Further, the equality holds if and only if $\depth gr_{\mathbb{M}}(M) > 0$ and $(a_1M :_M a_2) \cap M_1 = a_1M$.
\end{Theorem}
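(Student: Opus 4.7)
My plan is to reduce to dimension one by killing $a_1$ and then apply the dimension-one machinery from Section~2. Since $\depth M > 0$ and $a_1$ is $\mathbb{M}$-superficial, Proposition~\ref{Prop depth}(i) makes $a_1$ a regular element on $M$; set $\overline{M} := M/a_1 M$ with induced good $\qq$-filtration $\overline{\mathbb{M}} := \mathbb{M}/a_1 M$, which has dimension one.

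First, I would compare $e_2(\mathbb{M})$ with $e_2(\overline{\mathbb{M}})$. Proposition~\ref{induction}(iv) applied to $a_1$, together with $\lambda(0 :_M a_1) = 0$ and the fact that $M_{n+1} :_M a_1 = M_n$ for $n \gg 0$ (a consequence of superficiality plus regularity of $a_1$), gives
$$e_2(\overline{\mathbb{M}}) - e_2(\mathbb{M}) = \sum_{i \geq 0} \lambda\bigl((M_{i+1} :_M a_1)/M_i\bigr) \geq 0,$$
so $e_2(\mathbb{M}) \leq e_2(\overline{\mathbb{M}})$, with equality iff $M_{i+1} :_M a_1 = M_i$ for every $i \geq 0$. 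By Propositions~\ref{induction}(v) and~\ref{Prop depth}(ii), this equality is in turn equivalent to $\depth gr_{\mathbb{M}}(M) \geq 1$.

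Next, I would bound $e_2(\overline{\mathbb{M}})$. Combining Lemmas~\ref{Form un} and~\ref{Form e} (with the $\overline{\mathbb{M}}$-superficial element $a_2$ on $\overline{M}$) yields
$$e_2(\overline{\mathbb{M}}) = \sum_{n \geq 1} n\bigl[\lambda(\overline{M}_{n+1}/a_2 \overline{M}_n) - \lambda(0 :_{\overline{M}_n} a_2)\bigr].$$
A direct computation using that any $a_2 x + a_1 y \in M_{n+1}$ with $x \in M_n$ forces $a_1 y \in M_{n+1}$ shows
$$M_{n+1} \cap (a_2 M_n + a_1 M) = a_2 M_n + a_1(M_{n+1} :_M a_1),$$
so by the second isomorphism theorem $\lambda(\overline{M}_{n+1}/a_2 \overline{M}_n) = \lambda(M_{n+1}/(a_2 M_n + a_1(M_{n+1}:_M a_1)))$; this is bounded above by $\lambda(M_{n+1}/JM_n)$ since $a_1 M_n \subseteq a_1(M_{n+1} :_M a_1)$. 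Summing over $n \geq 1$ gives $e_2(\overline{\mathbb{M}}) \leq \sum_{n \geq 1} n\lambda(M_{n+1}/JM_n)$, and chaining with the previous paragraph establishes the claimed inequality.

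For the equality statement, both inequalities must collapse. The first forces $\depth gr_{\mathbb{M}}(M) \geq 1$ and hence $M_{n+1} :_M a_1 = M_n$ for all $n$; plugging this back makes $a_1(M_{n+1} :_M a_1) = a_1 M_n \subseteq JM_n$, so the per-$n$ inequality on $\lambda(\overline{M}_{n+1}/a_2 \overline{M}_n)$ is automatic and the only residual requirement is $\lambda(0 :_{\overline{M}_n} a_2) = 0$ for every $n \geq 1$. A short computation gives $0 :_{\overline{M}_n} a_2 = \bigl(M_n \cap (a_1 M :_M a_2) + a_1 M\bigr)/a_1 M$, and since $M_n \subseteq M_1$ for every $n \geq 1$, vanishing for all such $n$ reduces to the single colon condition $(a_1 M :_M a_2) \cap M_1 = a_1 M$ (the reverse inclusion being obvious). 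The main obstacle I anticipate is the bookkeeping in the second step: carrying out the identification of $\overline{M}_{n+1}/a_2 \overline{M}_n$ with a quotient of $M_{n+1}$ cleanly, and then verifying that under $\depth gr_{\mathbb{M}}(M) \geq 1$ the infinite family of equality conditions collapses to the single colon condition at $n = 1$.
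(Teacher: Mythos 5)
Your proposal is correct and follows essentially the same route as the paper: reduce modulo the regular superficial element $a_1$, compare $e_2(\mathbb{M})$ with $e_2(\mathbb{M}/a_1M)$ via Proposition \ref{induction}(iv)--(v), and then bound $e_2(\mathbb{M}/a_1M)$ termwise using Lemmas \ref{Form un} and \ref{Form e} applied to $a_2$ on the one-dimensional quotient. Your identification $M_{n+1}\cap(a_2M_n+a_1M)=a_2M_n+a_1(M_{n+1}:_Ma_1)$ is just the paper's $JM_n+(a_1M\cap M_{n+1})$ written differently, and the equality analysis (depth condition from the first inequality, colon condition from the vanishing of $0:_{\overline{M}_1}a_2$) matches the paper's argument.
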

\begin{proof} Define $\overline{M} := M/a_1M$  and $\{\overline{M}_n\} := \mathbb{M}/a_1M = \{ (M_n+a_1M)/a_1M\}.$ Then $\{\overline{M}_n\}$ is a good $\qq$-filtration of $\overline{M}$ and $\dim(\overline{M}) = 1$. Since $\depth M > 0$, by Proposition \ref{Prop depth}, one has $a_1$ is $M$-regular. Hence, from Proposition \ref{induction} (iv), we have
\begin{equation}
e_2(\mathbb{M}/a_1M) = e_2(\mathbb{M}) + \sum_{i = 0}^{n} \lambda(M_{i+1} : a_1/M_i)  \ \text{for} \ n\gg0.
\end{equation}
Since $a_2$ is an $\mathbb{M}/a_1M$-superficial element for $\qq$, by Lemma \ref{Form un} for every $n \geq 1$
\begin{align*}
u_n(\mathbb{M}/a_1M) & = \lambda(\overline{M}_{n+1}/a_2\overline{M}_n) - \lambda(a_1M :_{\overline{M}_n} a_2) \\
                     & = \lambda\left(\frac{M_{n+1}}{JM_n + (a_1M \cap M_{n+1})}\right) - \lambda\left(\frac{(a_1M : a_2) \cap M_n + a_1M}{a_1M}\right) \\
                     & \leq \lambda(M_{n+1}/JM_n).
\end{align*}
Therefore, by Lemma \ref{Form e}, we have
\begin{equation}
e_2(\mathbb{M}/a_1M) = \underset{n \geq 1}{\sum} n u_n(\mathbb{M}/a_1M) \leq \underset{n \geq 1}{\sum} n  \lambda(M_{n+1}/JM_n).
\end{equation}
From (1) and (2) one has $e_2(\mathbb{M}) \leq e_2(\mathbb{M}/a_1M) \leq \underset{n \geq 1}{\sum} n  \lambda(M_{n+1}/JM_n).$
\vskip 0.2cm
If the equality holds then $e_2(\mathbb{M}) = e_2(\mathbb{M}/a_1M) = \underset{n \geq 1}{\sum} n  \lambda(M_{n+1}/JM_n).$ By Proposition \ref{induction} (v), $e_2(\mathbb{M}) = e_2(\mathbb{M}/a_1M)$ implies $\depth gr_{\mathbb{M}}(M) > 0$. Furthermore, from the inequality (2),
$$e_2(\mathbb{M}/a_1M) = \underset{n \geq 1}{\sum} n\lambda(M_{n+1}/JM_n) \Rightarrow u_n(\mathbb{M}/a_1M) =  \lambda(M_{n+1}/JM_n) \ \text{for every} \ n \geq 1.$$
In particular, $u_1(\mathbb{M}/a_1M) =  \lambda(M_{2}/JM_1)$ and this proves $(a_1M :_M a_2) \cap M_1 = a_1M$.

For the converse, if $\depth gr_{\mathbb{M}}(M) > 0$ then by Proposition \ref{Prop depth}, we have $a_1^*$ is a regular element on $gr_{\mathbb{M}}(M)$ and $a_1M \cap M_{n+1} = a_1M_n$ for every $n \geq 1$. On the other hand, for every $n \geq 1$
$$(a_1M : a_2) \cap M_n \subseteq (a_1M : a_2) \cap M_1 = a_1M.$$
This implies $(a_1M : a_2) \cap M_n + a_1M = a_1M$. Thus for every $n \geq 1$
\begin{align*}
u_n(\mathbb{M}/a_1M) &= \lambda\left(\frac{M_{n+1}}{JM_n + (a_1M \cap M_{n+1})}\right) - \lambda\left(\frac{(a_1M : a_2) \cap M_n + a_1M}{a_1M}\right)\\
                     &= \lambda(M_{n+1}/JM_n).
\end{align*}
Finally, since $\depth gr_{\mathbb{M}}(M) > 0$, we get
$$e_2(\mathbb{M}) = e_2(\mathbb{M}/a_1M) = \underset{n \geq 1}{\sum} n u_n(\mathbb{M}/a_1M) = \underset{n \geq 1}{\sum} n  \lambda(M_{n+1}/JM_n).$$
\end{proof}
Notice that if $M$ is Cohen-Macaulay then the condition $(a_1M :_M a_2) \cap M_1 = a_1M$ holds. However, the converse is not true. For instance, we see the following examples.
\begin{Example} \label{condition 1} \emph{Let $R = k[x,y,z,t]/((x^2,z^2)\cap(x-y,z+t))$. Then $R$ is a ring of dimension two and depth one. Let $J = (x^2+y^2,z^2+t^2)$ be a parameter ideal of $R$. Consider the good $J$-filtration $\mathbb{N} = \{ J^n \}$ of $R$. Then we have
$$((x^2+y^2) : z^2+t^2) \cap J = (x^2+y^2).$$
However, $R$ is not Cohen-Macaulay.}
\end{Example}
\begin{Example} \label{condition 2} \emph{(See \cite[Example 3.8]{LM}) Let $R = k[[x^5,xy^4,x^4y,y^5]] \cong k[[t_1,t_2,t_3,t_4]]/I$, where $I = (t_2t_3-t_1t_4,t_2^4-t_3t_4^3,t_1t_2^3-t_3^2t_4^2,t_1^2t_2^2-t_3^3t_4,t_1^3t_2-t_3^4,t_3^5-t_1^4t_4)$. Then $R$ is a domain of dimension two and depth one. Let $J = (x^5,y^5)$ be a parameter ideal of $R$. Consider the good $J$-filtration $\mathbb{N} = \{ J^n \}$ of $R$. Then $e_2(\mathbb{N}) = 0$, this means the equality in Theorem \ref{dim 2} holds; and we are able to check that
$$((x^5) : y^5) \cap J = (x^5).$$
However, $R$ is not Cohen-Macaulay.}
\end{Example}
Let $\mathbb{M} = \{ M_n \}$ be a good $\qq$-filtration of $R$-module $M$ and assume that $\depth M > 0$. Let $J=(a_1,a_2)$ be an ideal of $R$ generated by a maximal $\mathbb{M}$-superficial sequence for $\qq$. Denote by $\mathbb{N} := \{ J^nM\}$ the good $J$-adic filtration of $M$. Notice that, by \cite[Lemma 2.4]{RV1}, we can assume $a_1, a_2$ is also a maximal $\mathbb{N}$-superficial sequence for $J$. Denote by $\mathbb{N}/a_1M := \{(J^nM + a_1M)/a_1M\}$ the $(a_2)$-adic filtration of $\overline{M} = M/a_1M$. We have the following result.
\begin{Theorem} \label{difference dim 2} With the above assumptions we have
$$e_2(\mathbb{M}) - e_2(\mathbb{N}) \leq \underset{n \geq 1}{\sum} n  \lambda(M_{n+1}/JM_n) + \underset{n \geq 1}{\sum} \lambda(J^{n+1}M : a_1)/J^nM).$$
In particular, if $\depth gr_{\mathbb{N}}(M) > 0$ then
$$e_2(\mathbb{M}) - e_2(\mathbb{N}) \leq \underset{n \geq 1}{\sum} n  \lambda(M_{n+1}/JM_n).$$
\end{Theorem}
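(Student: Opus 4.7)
The plan is to mimic the proof of Theorem \ref{dim 2} for both filtrations $\mathbb{M}$ and $\mathbb{N}$ simultaneously and then take the difference. As remarked before the statement, $a_1,a_2$ is a maximal superficial sequence for both $\mathbb{M}$ and $\mathbb{N}$, so setting $\overline{M}=M/a_1M$ produces two good one-dimensional filtrations $\mathbb{M}/a_1M$ and $\mathbb{N}/a_1M$ on $\overline{M}$; the latter is nothing but the $(a_2)$-adic filtration.

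First I would invoke Proposition \ref{induction}(iv) for the $M$-regular element $a_1$ to obtain
\[
 e_2(\mathbb{M}) = e_2(\mathbb{M}/a_1M) - \sum_{i\ge 1}\lambda((M_{i+1}:a_1)/M_i),\qquad
 e_2(\mathbb{N}) = e_2(\mathbb{N}/a_1M) - \sum_{i\ge 1}\lambda((J^{i+1}M:a_1)/J^iM),
\]
where $\lambda(0:a_1)=0$ since $a_1$ is $M$-regular and the $i=0$ terms vanish because $a_1\in\qq\cap J$. Next I would expand the $u_n$'s of the two one-dimensional filtrations via Lemma \ref{Form un}. The expression for $u_n(\mathbb{M}/a_1M)$ is the one already computed in the proof of Theorem \ref{dim 2}; for $\mathbb{N}/a_1M$ the first summand vanishes because $J^{n+1}M=a_1J^nM+a_2J^nM$, leaving
\[
 u_n(\mathbb{N}/a_1M) = -\,\lambda\!\left(\frac{(a_1M:a_2)\cap J^nM+a_1M}{a_1M}\right).
\]

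The key observation is that the two negative contributions partially cancel: since $J\subseteq\qq$ and $\mathbb{M}$ is a $\qq$-filtration, $J^nM\subseteq M_n$, hence $(a_1M:a_2)\cap J^nM\subseteq (a_1M:a_2)\cap M_n$, and subtraction yields
\[
 u_n(\mathbb{M}/a_1M) - u_n(\mathbb{N}/a_1M) \le \lambda\!\left(\frac{M_{n+1}}{JM_n+(a_1M\cap M_{n+1})}\right) \le \lambda(M_{n+1}/JM_n).
\]
Weighting by $n$ and summing via Lemma \ref{Form e} gives $e_2(\mathbb{M}/a_1M) - e_2(\mathbb{N}/a_1M) \le \sum_{n\ge 1} n\,\lambda(M_{n+1}/JM_n)$. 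Substituting this into the two displayed formulas for $e_2(\mathbb{M})$ and $e_2(\mathbb{N})$ and dropping the non-positive contribution $-\sum_{i\ge 1}\lambda((M_{i+1}:a_1)/M_i)$ produces the first claimed inequality.

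For the ``in particular'' clause, $\depth gr_{\mathbb{N}}(M)>0$ together with Proposition \ref{Prop depth}(iii) gives $a_1M\cap J^{n+1}M=a_1J^nM$ for every $n\ge 1$; combined with the $M$-regularity of $a_1$ this forces $(J^{n+1}M:a_1)=J^nM$, so the extra sum on the right-hand side vanishes. I expect the main technical point to be identifying the correct cancellation at the level of $u_n$'s; once the key inclusion $(a_1M:a_2)\cap J^nM\subseteq (a_1M:a_2)\cap M_n$ is in hand, the rest is assembly of Lemmas \ref{Form un}, \ref{Form e} and Proposition \ref{induction}.
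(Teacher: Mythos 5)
Your proposal is correct and follows essentially the same route as the paper: pass to $\overline{M}=M/a_1M$, compare $u_n(\mathbb{M}/a_1M)$ and $u_n(\mathbb{N}/a_1M)$ via Lemma \ref{Form un} using $J^nM\subseteq M_n$, sum with Lemma \ref{Form e}, and account for the passage from $e_2(\mathbb{N}/a_1M)$ to $e_2(\mathbb{N})$ by Proposition \ref{induction}(iv); the only cosmetic difference is that the paper invokes $e_2(\mathbb{M})\leq e_2(\mathbb{M}/a_1M)$ directly where you keep both correction sums and drop the non-positive one at the end.
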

\begin{proof}
Since $a_2$ is a $\mathbb{N}/a_1M$-superficial element for $J$, by Lemma \ref{Form un} for every $n \geq 1$
\begin{align*}
u_n(\mathbb{N}/a_1M) & = \lambda\left(\frac{J^{n+1}M}{J^{n+1}M + (a_1M \cap J^{n+1}M)}\right) - \lambda\left(\frac{(a_1M : a_2) \cap J^nM + a_1M}{a_1M}\right) \\
                     & = - \lambda\left(\frac{(a_1M : a_2) \cap J^nM + a_1M}{a_1M}\right).
\end{align*}
Since $e_2(\mathbb{M}) \leq e_2(\mathbb{M}/a_1M)$, we have
\begin{align*}
e_2(\mathbb{M}) - e_2(\mathbb{N}/a_1M) & \leq e_2(\mathbb{M}/a_1M) - e_2(\mathbb{N}/a_1M) \\
                                       & = \underset{n \geq 1}{\sum} n(u_n(\mathbb{M}/a_1M) - u_n(\mathbb{N}/a_1M)) \\
                                       & \leq  \underset{n \geq 1}{\sum} n  \lambda(M_{n+1}/JM_n).
\end{align*}
On the other hand, by Proposition \ref{induction} (iv) we have
$$e_2(\mathbb{N}/a_1M) = e_2(\mathbb{N}) + \underset{n \geq 1}{\sum} \lambda(J^{n+1}M : a_1)/J^nM).$$
Thus $e_2(\mathbb{M}) - e_2(\mathbb{N}) \leq \underset{n \geq 1}{\sum} n  \lambda(M_{n+1}/JM_n) + \underset{n \geq 1}{\sum} \lambda(J^{n+1}M : a_1)/J^nM).$

In particular, if $\depth gr_{\mathbb{N}}(M) > 0$ then $a_1^*$ is $gr_{\mathbb{N}}(M)$-regular and $(J^{n+1}M : a_1) = J^nM$ for every $n \geq 1$. Hence, $$e_2(\mathbb{M}) - e_2(\mathbb{N}) \leq \underset{n \geq 1}{\sum} n  \lambda(M_{n+1}/JM_n).$$
\end{proof}
\begin{Remark}\emph{In the proof of Theorem \ref{difference dim 2}, one observes that without assumption on the depth of $gr_{\mathbb{N}}(M)$ we have the following bound
$$e_2(\mathbb{M}) - e_2(\mathbb{N}/a_1M) \leq  \underset{n \geq 1}{\sum} n  \lambda(M_{n+1}/JM_n).$$}
\end{Remark}
We now consider the lower bound for the second Hilbert coefficient. In the case $(R,\mm)$ is a Cohen-Macaulay local ring, Narita proved in \cite{N} that $e_2(\qq) \geq 0$ for every $\mm$-primary ideal $\qq$ of $R$, where $e_2(\qq)$ is the second Hilbert coefficient of the $\qq$-adic filtration of $R$. The non-negativity of the second Hilbert coefficient was extended to the case of Cohen-Macaulay modules, see for instance \cite[Proposition 3.1]{RV1}. However, in the case the module $M$ is not Cohen-Macaulay then the second Hilbert coefficient could be negative. For instance, we see the following example.
\begin{Example} \emph{Let $R=k[[x,y,z]]/(x^2,xy)$ be a local ring of dimension $2$ and $\depth R = 1$. Then the Hilbert series of the $\mm$-adic filtration of $R$ is the following
$$P_{\mm}(t) = \frac{1+t-t^2}{(1-t)^2}.$$
This means $e_2(\mm) = -1 <0$.}
\end{Example}
Rossi and Valla in \cite{RV1} used a very effective device to study the Hilbert coefficients that is the Ratliff-Rush filtration. Let $\qq$ be an $\mm$-primary ideal of $R$ and $\mathbb{M} = \{M_n\}$ a good $\qq$-filtration on the module $M$. For every $n \geq 0$ we define
$$\widetilde{M}_n := \underset{k \geq 1}{\bigcup} (M_{n+k} :_M \qq^k).$$
Notice that since $M$ is Noetherian, there exists a positive integer $t$ (depending on $n$) such that
$$\widetilde{M}_n = M_{n+k} : \qq^k \quad \forall k \geq t.$$
Then $\widetilde{\mathbb{M}} := \{ \widetilde{M}_n\}$ is a good $\qq$-filtration on $M$ and it is called the Ratliff-Rush filtration associated to $\mathbb{M}$. We refer to \cite{PZ}, \cite{RR} and \cite[Sect. 3.1]{RV1} for more properties of the Ratliff-Rush filtration. By using this approach we give a lower bound for the second Hilbert coefficient in the case $M$ has almost maximal depth. More precisely, we have the following result involving the postulation number of $\widetilde{\mathbb{M}}$ (see the definition of postulation number in section 2).
\begin{Theorem}  \label{lower bound} Let $\mathbb{M} = \{ M_n \}$ be a good $\qq$-filtration of $R$-module $M$ of dimension two and $\depth M > 0$. Suppose $J=(a_1,a_2)$ is an ideal of $R$ generated by a maximal $\mathbb{M}$-superficial sequence for $\qq$. Then, we have
$$e_2(\mathbb{M}) \geq -\binom{s+2}{2}\lambda\left(\frac{a_1M:a_2}{a_1M}\right),$$
where $s$ is the postulation number of the Ratliff-Rush filtration associated to $\mathbb{M}$.
\end{Theorem}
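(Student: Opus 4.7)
The plan is to replace $\mathbb{M}$ by its Ratliff-Rush closure $\widetilde{\mathbb{M}}$ — which preserves $e_2$ since the two filtrations have the same Hilbert polynomial — then reduce modulo $a_1$ and apply the one-dimensional formulas of Lemmas \ref{Form un} and \ref{Form e}. The hypothesis $\depth M > 0$ gives, by Proposition \ref{Prop depth}(i), that $a_1$ is $M$-regular. The heart of the argument is the claim that $\widetilde{M}_{n+1} :_M a_1 = \widetilde{M}_n$ for every $n \geq 0$, which will force $a_1^{*}$ to be regular on $gr_{\widetilde{\mathbb{M}}}(M)$ and thus allow us to control the postulation number of the one-dimensional reduction $\mathbb{L} := \widetilde{\mathbb{M}}/a_1M$.

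To prove this claim, let $x \in M$ satisfy $a_1 x \in \widetilde{M}_{n+1}$, so that $a_1 x \qq^{k} \subseteq M_{n+1+k}$ for all sufficiently large $k$. Pick $c$ witnessing the superficiality of $a_1$, so $(M_{m+1}:_M a_1) \cap M_c = M_m$ for $m \geq c$. For $k \geq c$ large, one has $x \qq^k \subseteq \qq^k M \subseteq M_c$ and $x \qq^k \subseteq M_{n+1+k} :_M a_1$, whence $x \qq^k \subseteq M_{n+k}$ and $x \in \widetilde{M}_n$. Combined with $\lambda(0:_M a_1) = 0$, Proposition \ref{induction}(iv) yields $e_2(\widetilde{\mathbb{M}}/a_1M) = e_2(\widetilde{\mathbb{M}})$, and Proposition \ref{induction}(v) shows that $a_1^{*}$ is a regular element on $gr_{\widetilde{\mathbb{M}}}(M)$. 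By Proposition \ref{Prop depth}(iii), $P_{\mathbb{L}}(t) = (1-t)P_{\widetilde{\mathbb{M}}}(t)$, so $h_{\mathbb{L}}(t) = h_{\widetilde{\mathbb{M}}}(t)$ has degree $s+2$ and the postulation number of $\mathbb{L}$ equals $s+1$; in particular $u_n(\mathbb{L}) = 0$ for every $n > s+1$.

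For the conclusion, apply Lemma \ref{Form un} to the $\mathbb{L}$-superficial element $a_2$: for each $n \geq 1$,
$$u_n(\mathbb{L}) = \lambda\!\left(\frac{\widetilde{M}_{n+1} + a_1M}{a_2\widetilde{M}_n + a_1M}\right) - \lambda\!\left(\frac{(a_1M:_M a_2) \cap \widetilde{M}_n + a_1M}{a_1M}\right) \geq -\lambda\!\left(\frac{a_1M:_M a_2}{a_1M}\right),$$
since the first summand is non-negative and the second embeds into $(a_1M:_M a_2)/a_1M$. Combining Lemma \ref{Form e} with the vanishing of $u_n(\mathbb{L})$ for $n > s+1$ gives
$$e_2(\mathbb{M}) = e_2(\widetilde{\mathbb{M}}) = e_2(\mathbb{L}) = \sum_{n=1}^{s+1} n\, u_n(\mathbb{L}) \geq -\binom{s+2}{2}\lambda\!\left(\frac{a_1M:_M a_2}{a_1M}\right),$$
which is the desired bound. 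The main obstacle is the Ratliff-Rush identity $\widetilde{M}_{n+1}:_M a_1 = \widetilde{M}_n$ for all $n \geq 0$; it carries out the essential work of promoting $a_1$ to a regular element on the associated graded module and pinning down the postulation number of $\mathbb{L}$ to the value $s+1$ that produces exactly $\binom{s+2}{2}$ in the final sum.
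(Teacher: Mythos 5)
Your proof is correct and follows essentially the same route as the paper: pass to the Ratliff--Rush filtration $\widetilde{\mathbb{M}}$, use that $a_1^*$ is regular on $gr_{\widetilde{\mathbb{M}}}(M)$ to reduce to the one-dimensional filtration $\widetilde{\mathbb{M}}/a_1M$, bound each $u_n$ below by $-\lambda\bigl((a_1M:a_2)/a_1M\bigr)$, and cut the sum off at $n=s+1$ via the postulation number. The only difference is that you prove the key fact $\widetilde{M}_{n+1}:_M a_1=\widetilde{M}_n$ (equivalently $\depth gr_{\widetilde{\mathbb{M}}}(M)\geq 1$ and $e_2(\widetilde{\mathbb{M}})=e_2(\mathbb{M})$) directly from superficiality, whereas the paper simply cites \cite[Lemma 3.1]{RV1} for it.
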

\begin{proof} Let $\widetilde{\mathbb{M}} = \{ \widetilde{M}_n\}$ be the Ratliff-Rush filtration associated to $\mathbb{M}$. By \cite[Lemma 3.1]{RV1} we have $\depth gr_{\widetilde{\mathbb{M}}}(M) \geq 1$. Hence, by Proposition \ref{induction} (v) and Lemma \ref{Form e}
$$e_2(\widetilde{\mathbb{M}}) = e_2(\widetilde{\mathbb{M}}/a_1M) = \underset{n \geq 1}{\sum} n u_n(\widetilde{\mathbb{M}}/a_1M).$$
By \cite[Lemma 3.1]{RV1}, $a_1, a_2$ is also a maximal $\widetilde{\mathbb{M}}$-superficial sequence for $\qq$, so that $a_2$ is a $\widetilde{\mathbb{M}}/a_1M$-superficial element for $\qq$. Hence, by Lemma \ref{Form un} for every $n \geq 1$
\begin{align*}
u_n(\widetilde{\mathbb{M}}/a_1M) &= \lambda(\widetilde{M}_{n+1}/J\widetilde{M}_n) - \lambda\left(\frac{(a_1M:a_2) \cap \widetilde{M}_n+a_1M}{a_1M}\right) \\
                                 &\geq - \lambda\left(\frac{a_1M:a_2}{a_1M}\right)
\end{align*}
Since $\depth gr_{\widetilde{\mathbb{M}}}(M) \geq 1$, once has $s(\widetilde{\mathbb{M}}/a_1M) = s(\widetilde{\mathbb{M}})+1$. Hence $u_n(\widetilde{\mathbb{M}}/a_1M) = 0$ for every $n \geq s+2$. Thus, by  \cite[Lemma 3.1]{RV1} we have
$$e_2(\mathbb{M}) = e_2(\widetilde{\mathbb{M}}) = \sum_{n = 1}^{s+1} n u_n(\widetilde{\mathbb{M}}/a_1M) \geq - \sum_{n = 1}^{s+1} n\lambda\left(\frac{a_1M:a_2}{a_1M}\right) = -\binom{s+2}{2}\lambda\left(\frac{a_1M:a_2}{a_1M}\right).$$
\end{proof}
If $M$ is Cohen-Macaulay then $(a_1M:a_2) = a_1M$. Moreover, because the study of $e_2(\mathbb{M})$ can be reduced to the $2$-dimensional modules by Proposition \ref{induction}, the above theorem implies the non-negativity of the second Hilbert coefficient in the Cohen-Macaulay modules.
\begin{Corollary}\emph{(see \cite{N})} Let $\mathbb{M}$ be a good $\qq$-filtration of the Cohen-Macaulay module $M$ of dimension $d \geq 2$. Then
$$e_2(\mathbb{M}) \geq 0.$$
\end{Corollary}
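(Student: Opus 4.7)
The plan is the one already suggested by the paragraph preceding the corollary: reduce from dimension $d$ down to dimension $2$ by going modulo a regular $\mathbb{M}$-superficial sequence, and then apply Theorem \ref{lower bound} to the Cohen-Macaulay two-dimensional quotient, where the colon module vanishes.

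First I would choose a maximal $\mathbb{M}$-superficial sequence $a_1,\ldots,a_d$ for $\qq$; this exists because the residue field is infinite. Since $M$ is Cohen-Macaulay, Proposition \ref{Prop depth}(i) guarantees that $a_1,\ldots,a_d$ is a regular sequence on $M$, so for every $i \leq d-2$ the quotient $M/(a_1,\ldots,a_i)M$ is Cohen-Macaulay of dimension $d-i$ and the induced filtration $\mathbb{M}/(a_1,\ldots,a_i)M$ is a good $\qq$-filtration. Iterating Proposition \ref{induction}(ii) (applied with the dimension parameter at least $2$ at each step, so the index $i=2$ stays in the stable range) yields
$$e_2(\mathbb{M}) = e_2(\mathbb{M}/a_1M) = \cdots = e_2\bigl(\mathbb{M}/(a_1,\ldots,a_{d-2})M\bigr).$$
This reduces the problem to the two-dimensional Cohen-Macaulay case.

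Next I would set $\overline{M} := M/(a_1,\ldots,a_{d-2})M$ and apply Theorem \ref{lower bound} to the good $\qq$-filtration $\overline{\mathbb{M}} := \mathbb{M}/(a_1,\ldots,a_{d-2})M$ on $\overline{M}$ with the maximal $\overline{\mathbb{M}}$-superficial sequence $a_{d-1},a_d$ (which remains superficial for the quotient filtration, as is standard for superficial sequences). The theorem gives
$$e_2(\overline{\mathbb{M}}) \geq -\binom{s+2}{2}\lambda\!\left(\frac{a_{d-1}\overline{M} : a_d}{a_{d-1}\overline{M}}\right).$$
Because $\overline{M}$ is Cohen-Macaulay of dimension $2$, the pair $a_{d-1},a_d$ is a regular sequence on $\overline{M}$, whence $a_{d-1}\overline{M}:a_d = a_{d-1}\overline{M}$ and the length on the right is zero. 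Combining with the reduction step yields $e_2(\mathbb{M}) = e_2(\overline{\mathbb{M}}) \geq 0$.

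There is no genuine obstacle: everything is a packaged consequence of results already stated. The only point that requires a brief justification is that $a_{d-1},a_d$ is indeed a maximal superficial sequence for the quotient filtration in two variables, but this is built into the very process that produced the sequence $a_1,\ldots,a_d$ in the first place, together with Proposition \ref{induction}(i) which ensures dimensions drop correctly.
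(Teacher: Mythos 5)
Your argument is essentially the paper's own: reduce to a two-dimensional Cohen--Macaulay quotient modulo a superficial regular sequence, so that $e_2$ is preserved, and then apply Theorem \ref{lower bound}, where the colon module $(a_{d-1}\overline{M}:a_d)/a_{d-1}\overline{M}$ vanishes by regularity. One small inaccuracy: at the final reduction step, from dimension $3$ to dimension $2$, the index $i=2$ equals $d-1$ and so falls outside the range of Proposition \ref{induction}(ii); you need part (iii) there, but its correction term $(-1)^{d-1}\lambda(0:_M a)$ vanishes because the element is regular on the Cohen--Macaulay module, so the chain of equalities $e_2(\mathbb{M})=\cdots=e_2(\overline{\mathbb{M}})$ still holds.
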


\section{The higher dimensional case}
In this section we extend Theorem \ref{dim 2}, Theorem \ref{difference dim 2} and Theorem \ref{lower bound} to the higher dimensions. The following result extends Theorem \ref{dim 2}.
\begin{Theorem}\label{higher dim}
Let $\mathbb{M} = \{ M_n \}$ be a good $\qq$-filtration of $R$-module $M$ of dimension $d \geq 2$ and $\depth M \geq d-1$. Suppose $J=(a_1,\cdots,a_d)$ is an ideal of $R$ generated by a maximal $\mathbb{M}$-superficial sequence for $\qq$. For each $i=1,\cdots,d-1$, denote the ideal $J_i = (a_1,\cdots,a_{d-i})$ of $R$. Then, we have
$$e_2(\mathbb{M}) \leq \underset{n \geq 1}{\sum} n  \lambda(M_{n+1}/JM_n).$$
Further, the equality holds if and only if $\depth gr_{\mathbb{M}}(M) \geq d-1$ and $(J_1M :_M a_d) \cap M_1 = J_1M$.
\end{Theorem}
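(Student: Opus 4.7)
The plan is to reduce the statement to the two-dimensional Theorem \ref{dim 2} by quotienting out $J_{2}=(a_{1},\dots,a_{d-2})$; for $d=2$ we have $J_{2}=0$ and the claim coincides with Theorem \ref{dim 2}, so I may assume $d\geq 3$. Set $\overline{M}=M/J_{2}M$ and $\overline{\mathbb{M}}=\mathbb{M}/J_{2}M$. Since $\depth M\geq d-1$, Proposition \ref{Prop depth}(i) gives that $a_{1},\dots,a_{d-2}$ is $M$-regular, so $\dim\overline{M}=2$ and $\depth\overline{M}\geq 1$; moreover $a_{d-1},a_{d}$ is a maximal $\overline{\mathbb{M}}$-superficial sequence for $\qq$ in $\overline{R}=R/J_{2}$. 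Theorem \ref{dim 2} thus applies to $\overline{\mathbb{M}}$ with $\overline{J}=J/J_{2}$.

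Two pieces of data need to be matched. For the coefficient $e_{2}$: iterating Proposition \ref{induction}(ii) preserves $e_{2}$ along each of the first $d-3$ one-step reductions (the dimension remains at least $4$), and the final reduction from dimension $3$ to dimension $2$ preserves $e_{2}=e_{d-1}$ by Proposition \ref{induction}(iii), the correction $\lambda(0:a)$ vanishing by $M$-regularity; hence $e_{2}(\overline{\mathbb{M}})=e_{2}(\mathbb{M})$. For the right-hand side, the canonical surjection
$$M_{n+1}/JM_{n}\twoheadrightarrow (M_{n+1}+J_{2}M)/(JM_{n}+J_{2}M)=\overline{M}_{n+1}/\overline{J}\,\overline{M}_{n}$$
yields $\lambda(\overline{M}_{n+1}/\overline{J}\,\overline{M}_{n})\leq\lambda(M_{n+1}/JM_{n})$ for every $n$. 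Combining these with Theorem \ref{dim 2} for $\overline{\mathbb{M}}$ gives the stated inequality.

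For the equality, suppose first $e_{2}(\mathbb{M})=\sum_{n\geq 1}n\lambda(M_{n+1}/JM_{n})$. Then both comparisons are equalities: Theorem \ref{dim 2} forces $\depth gr_{\overline{\mathbb{M}}}(\overline{M})\geq 1$ and $(a_{d-1}\overline{M}:_{\overline{M}}a_{d})\cap\overline{M}_{1}=a_{d-1}\overline{M}$, and each surjection becomes an isomorphism, i.e., $M_{n+1}\cap(JM_{n}+J_{2}M)=JM_{n}$ for every $n\geq 1$. Sally's machine (Proposition \ref{Prop depth}(iv)) lifts the depth bound to $\depth gr_{\mathbb{M}}(M)\geq d-1$; and since $M_{1}\subseteq M_{1}+J_{2}M$, the $\overline{M}$-colon relation restricts at once to $(J_{1}M:_{M}a_{d})\cap M_{1}\subseteq J_{1}M$, which combined with the trivial reverse inclusion gives the required equality. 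Conversely, assume $\depth gr_{\mathbb{M}}(M)\geq d-1$ and $(J_{1}M:_{M}a_{d})\cap M_{1}=J_{1}M$. Valabrega-Valla (Proposition \ref{Prop depth}(iii)) gives $J_{2}M\cap M_{n+1}=J_{2}M_{n}\subseteq JM_{n}$ for each $n\geq 1$, so the surjection above is an isomorphism; Sally's machine descends the depth condition to $\overline{\mathbb{M}}$; and an element chase (write $m\in M_{1}+J_{2}M$ as $m=m_{1}+x$ with $m_{1}\in M_{1}$, $x\in J_{2}M$; then $a_{d}m_{1}\in J_{1}M$, so $m_{1}\in(J_{1}M:_{M}a_{d})\cap M_{1}=J_{1}M$ by hypothesis) descends the colon condition to $\overline{\mathbb{M}}$. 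Theorem \ref{dim 2} then yields equality for $\overline{\mathbb{M}}$, and the length isomorphism lifts it to $\mathbb{M}$.

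The main technical obstacle is transferring the colon condition across the quotient $M\to\overline{M}=M/J_{2}M$: the two forms of the condition are equivalent thanks to the Valabrega-Valla identity $J_{2}M\cap M_{n+1}=J_{2}M_{n}$, whose validity is precisely the input $\depth gr_{\mathbb{M}}(M)\geq d-1$.
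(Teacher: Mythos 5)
Your proposal is correct and follows essentially the same route as the paper: reduce to the two-dimensional case by passing to $M/J_2M$, use Proposition \ref{induction} to preserve $e_2$, compare lengths via the canonical surjection, and handle the equality case with Sally's machine, Valabrega--Valla, and the equivalence of the colon conditions on $M$ and $\overline{M}$. Your treatment is in fact slightly more explicit than the paper's at two points (which part of Proposition \ref{induction} preserves $e_2$ at each reduction step, and the element chase descending the colon condition in the converse), but the argument is the same.
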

\begin{proof} By Theorem \ref{dim 2} it is enough to consider the case $d \geq 3$. Define $\overline{M} := M/J_2M$  and $\{\overline{M}_n\} := \mathbb{M}/J_2M = \{ (M_n+J_2M)/J_2M\}.$ Then $\{\overline{M}_n\}$ is a good $\qq$-filtration of $\overline{M}$ and $\dim(\overline{M}) = 2$.

Since $\depth(M) \geq d-1$, one has $J_2$ is generated by a regular sequence and $\depth(\overline{M}) = \depth(M) - (d-2) \geq 1$. Hence, by Proposition \ref{induction}
 $$e_2(\mathbb{M}) = e_2(\mathbb{M}/J_{d-1}M) = \cdots = e_2(\mathbb{M}/J_2M).$$
Denote by $K = (a_{d-1},a_d)$ the ideal generated by a maximal $\mathbb{M}/J_2M$-superficial sequence for $\qq$, then by Theorem \ref{dim 2} we have
\begin{align*}
e_2(\mathbb{M}/J_2M) & \leq \underset{n \geq 1}{\sum} n  \lambda(\overline{M}_{n+1}/K\overline{M}_n) \\
&= \underset{n \geq 1}{\sum} n \lambda\left(\frac{M_{n+1}+J_2M}{KM_n + J_2M}\right) \\
&= \underset{n \geq 1}{\sum} n \lambda\left(\frac{M_{n+1}+J_2M}{JM_n + J_2M}\right) \\
&= \underset{n \geq 1}{\sum} n \lambda\left(\frac{M_{n+1}}{JM_n + (J_2M \cap M_{n+1}) }\right) \\
&\leq \underset{n \geq 1}{\sum} n \lambda(M_{n+1}/JM_n)
\end{align*}
Thus $e_2(\mathbb{M}) \leq \underset{n \geq 1}{\sum} n \lambda(M_{n+1}/JM_n)$.

If the equality holds then $e_2(\mathbb{M}/J_2M) = \underset{n \geq 1}{\sum} n  \lambda(\overline{M}_{n+1}/K\overline{M}_n)$. By Theorem \ref{dim 2} we have $\depth gr_{\mathbb{M}/J_2M}(M/J_2M) \geq 1$ and $(a_{d-1}\overline{M} :_{\overline{M}} a_d) \cap \overline{M}_1 = a_{d-1}\overline{M}.$ Hence, by Proposition \ref{Prop depth} (iv), we have $\depth gr_{\mathbb{M}}(M) \geq d-1$. Moreover,
\begin{align*}
& \ (a_{d-1}\overline{M} :_{\overline{M}} a_d) \cap \overline{M}_1 = a_{d-1}\overline{M}\\
\Leftrightarrow & \ (J_1M/J_2M :_{\overline{M}} a_d) \cap (M_1 + J_2M)/J_2M =  J_1M/J_2M\\
\Leftrightarrow & \ (J_1M :_M a_d) \cap M_1 = J_1M.
\end{align*}

For the converse, if $\depth gr_{\mathbb{M}}(M) \geq d-1$ then $\depth gr_{\mathbb{M}/J_2M}(M/J_2M) \geq 1$. Hence, by Theorem \ref{dim 2}, we have
$$e_2(\mathbb{M}/J_2M) = \underset{n \geq 1}{\sum} n  \lambda(\overline{M}_{n+1}/K\overline{M}_n) = \underset{n \geq 1}{\sum} n \lambda\left(\frac{M_{n+1}}{JM_n + (J_2M \cap M_{n+1}) }\right).$$
Since $\depth gr_{\mathbb{M}}(M) \geq d-1$, by Proposition \ref{Prop depth} (iii), we have
$$J_2M \cap M_{n+1} = J_2M_n \subseteq JM_n, \forall n \geq 1.$$
Thus $e_2(\mathbb{M}) = e_2(\mathbb{M}/J_2M) = \underset{n \geq 1}{\sum} n  \lambda(M_{n+1}/JM_n).$
\end{proof}
Notice that in Theorem \ref{higher dim} the condition $\depth M \geq d-1$ is necessary as the following example shows.
\begin{Example}\label{not almost} \emph{\cite[Example 3.7]{LM}. Let $R = k[x,y,z,u,v,w]/I$ where $I$ is the intersection of ideals $I = (x+y,z-u,w) \cap (z,u-v,y) \cap (x,u,w)$. Then $R$ is a ring of dimension three and depth one. Let $\qq = (u-y,z+w,x-v)$ be a parameter ideal of $R$. Consider the $\qq$-adic filtration $\mathbb{N} = \{\qq^n\}$ of $R$. We have $e_2(\mathbb{N}) = 1 > 0$ and this means that the bound for $e_2$ in Theorem \ref{higher dim} is not satisfied.}
\end{Example}
Let $(R,\mm)$ be a local ring and $\qq$ an $\mm$-primary ideal of $R$. For $i\geq0$, denote by $e_i(\qq)$ the Hilbert coefficient of the $\qq$-adic filtration $\{\qq^n\}$ of $R$. Theorem \ref{higher dim} implies the result by Mccune on non-positivity of the second Hilbert coefficient for the parameter ideals (See \cite[Theorem 3.5]{LM}).
\begin{Corollary} Let $(R, \mm)$ be a local ring of dimension $d \geq 2$  and $\depth R \geq d-1$.  Let $\qq \subseteq R$ be a parameter ideal. Then, we have $e_2(\qq) \leq 0.$
\end{Corollary}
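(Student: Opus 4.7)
The plan is to apply Theorem \ref{higher dim} directly, with $M = R$ and $\mathbb{M} = \{\qq^n\}$ the $\qq$-adic filtration. First I would observe that $R$ is a finitely generated $R$-module of dimension $d$, that $\{\qq^n\}$ is plainly a good $\qq$-filtration, and that the depth hypothesis $\depth R \geq d-1$ is exactly the hypothesis $\depth M \geq d-1$ required by Theorem \ref{higher dim}.

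Next I would choose the ideal $J$ demanded by Theorem \ref{higher dim}. Since $\qq$ is a parameter ideal in a $d$-dimensional local ring, any minimal generating set $a_1,\dots,a_d$ of $\qq$ is a system of parameters, and (using the standing assumption that $R/\mm$ is infinite, recalled at the start of Section~2) a generic such generating set is a maximal $\mathbb{M}$-superficial sequence for $\qq$. Taking $J = (a_1,\dots,a_d) = \qq$, the inequality in Theorem \ref{higher dim} specializes to
\begin{equation*}
e_2(\qq) \;\leq\; \sum_{n\geq 1} n\,\lambda\!\left(\qq^{n+1}/J\qq^{n}\right)
\;=\; \sum_{n\geq 1} n\,\lambda\!\left(\qq^{n+1}/\qq^{n+1}\right) \;=\; 0,
\end{equation*}
since $J\qq^n = \qq\cdot\qq^n = \qq^{n+1}$ for every $n\geq 1$. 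This yields the desired conclusion $e_2(\qq) \leq 0$.

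The only point requiring care is the choice of $J$: Theorem \ref{higher dim} is formulated for $J$ generated by a maximal superficial sequence, and I must ensure that $J$ can actually be taken equal to $\qq$ itself rather than a proper subideal. This is where the parameter-ideal hypothesis is used: because $\qq$ has exactly $d$ generators, any minimal generating set has length equal to $\dim R$, and the generic choice produces a maximal $\mathbb{M}$-superficial sequence whose generated ideal is all of $\qq$. I do not anticipate any further obstacle; the work is entirely done by Theorem \ref{higher dim}, and the corollary is essentially the observation that the right-hand side vanishes identically for $J$-adic filtrations with $J = \qq$.
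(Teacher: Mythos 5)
Your proposal is correct and is exactly the argument the paper intends (the corollary is stated there without proof as an immediate consequence of Theorem \ref{higher dim}): since $\qq$ is a parameter ideal and the residue field is infinite, a generic minimal generating set of $\qq$ is a maximal superficial sequence, so one may take $J=\qq$ and the right-hand side $\sum_{n\geq 1} n\,\lambda(\qq^{n+1}/\qq\cdot\qq^{n})$ vanishes. You also correctly flag the one point needing care, namely that $J$ really can be taken equal to $\qq$ itself.
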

As a consequence of Theorem \ref{higher dim} in case $M$ is Cohen-Macaulay we get the bound for the second Hilbert coefficient given by Rossi and Valla in \cite[Theorem 2.5]{RV1}.
\begin{Corollary} Let $\mathbb{M} = \{ M_n \}$ be a good $\qq$-filtration of a Cohen-Macaulay module $M$ of dimension $d \geq 2$.  Suppose $J$ is an ideal of $R$ generated by a maximal $\mathbb{M}$-superficial sequence for $\qq$. Then, we have
$$e_2(\mathbb{M}) \leq \underset{n \geq 1}{\sum} n  \lambda(M_{n+1}/JM_n).$$
Further, the equality holds if and only if $\depth gr_{\mathbb{M}}(M) \geq d-1$.
\end{Corollary}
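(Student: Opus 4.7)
The plan is to obtain this corollary as an immediate specialization of Theorem \ref{higher dim}. Since $M$ is Cohen-Macaulay of dimension $d$ one has $\depth M = d \geq d-1$, so the hypothesis of Theorem \ref{higher dim} is met for any ideal $J$ generated by a maximal $\mathbb{M}$-superficial sequence for $\qq$. That theorem then yields the asserted upper bound
$$e_2(\mathbb{M}) \leq \sum_{n \geq 1} n\, \lambda(M_{n+1}/JM_n)$$
without any extra work.

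For the equality statement, Theorem \ref{higher dim} characterizes equality by the two conditions $\depth gr_{\mathbb{M}}(M) \geq d-1$ and $(J_1 M :_M a_d) \cap M_1 = J_1 M$, where $J_1 = (a_1,\ldots,a_{d-1})$. The key step is to show that the second condition is automatic once $M$ is Cohen-Macaulay. Indeed, by Proposition \ref{Prop depth} (i) the maximal $\mathbb{M}$-superficial sequence $a_1,\ldots,a_d$ is a regular sequence on $M$, so in particular $a_d$ is a non-zero-divisor on $M/J_1 M$; equivalently, $J_1 M :_M a_d = J_1 M$. Moreover $J_1 \subseteq \qq$ and $\qq M = \qq M_0 \subseteq M_1$ by the filtration axiom, hence $J_1 M \subseteq M_1$, and consequently
$$(J_1 M :_M a_d) \cap M_1 = J_1 M \cap M_1 = J_1 M.$$
Thus the second equality condition in Theorem \ref{higher dim} holds trivially, and the characterization of equality collapses to $\depth gr_{\mathbb{M}}(M) \geq d-1$, as claimed.

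There is no genuine obstacle here: the corollary is a direct specialization of Theorem \ref{higher dim}, and the only ingredient beyond that theorem is the standard observation that a regular sequence yields $(J_1 M :_M a_d) = J_1 M$, which disposes of the auxiliary equality condition under the Cohen-Macaulay hypothesis.
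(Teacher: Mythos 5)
Your proof is correct and matches the paper's (implicit) argument: the corollary is stated there as a direct consequence of Theorem \ref{higher dim}, with the paper remarking after Theorem \ref{dim 2} that the colon condition holds automatically for Cohen-Macaulay $M$, exactly as you verify via the regular sequence and the inclusion $J_1M \subseteq \qq M \subseteq M_1$.
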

Applying Theorem \ref{higher dim} for the $\mm$-adic filtration of $R$ we have the following stronger result which extends the previous result in the particular case $M = R$ and $\qq = \mm$.
\begin{Corollary} Let $(R, \mm)$ be a local ring of dimension $d \geq 2$  and $\depth R \geq d-1$.  Suppose $J=(x_1,\cdots,x_d)$ is an ideal of $R$ generated by a maximal $\mm$-adic superficial sequence. Then, we have
$$e_2(\mm) \leq \underset{n \geq 1}{\sum} n  \lambda(\mm^{n+1}/J\mm^n).$$
Further, the equality holds if and only if $R$ is Cohen-Macaulay and $\depth gr_{\mm}(R) \geq d-1$.
\end{Corollary}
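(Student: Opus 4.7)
The plan is to derive the corollary as a direct specialization of Theorem \ref{higher dim} with $M = R$ and $\qq = \mm$. In this setting the filtration is $\mathbb{M} = \{\mm^n\}$, so $M_1 = \mm$ and $J_1 M = J_1 = (x_1,\dots,x_{d-1})$, and the hypothesis $\depth R \geq d-1$ of the corollary matches the hypothesis of Theorem \ref{higher dim}. The inequality $e_2(\mm) \leq \sum_{n \geq 1} n\,\lambda(\mm^{n+1}/J\mm^n)$ is therefore immediate.

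For the equality case, Theorem \ref{higher dim} translates to the two conditions $\depth gr_\mm(R) \geq d-1$ and $(J_1 :_R x_d) \cap \mm = J_1$. The first is already retained in the statement of the corollary, so the task reduces to showing that, under the running hypothesis $\depth R \geq d-1$, the colon condition $(J_1 :_R x_d) \cap \mm = J_1$ is equivalent to $R$ being Cohen-Macaulay. The reverse implication is straightforward: in a Cohen-Macaulay ring the system of parameters $x_1,\dots,x_d$ is automatically a regular sequence, so $(J_1 :_R x_d) = J_1 \subseteq \mm$ and the intersection condition is trivial.

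For the forward implication I would first argue that $(J_1 :_R x_d) \subseteq \mm$: a unit in this colon ideal would force $x_d \in J_1$, making the $\mm$-primary ideal $J$ equal to $J_1$ and thus generated by at most $d-1$ elements, contradicting $\height \mm = d$ via Krull's principal ideal theorem. Consequently the colon condition collapses to $(J_1 :_R x_d) = J_1$, which is exactly the statement that $x_d$ is regular on $R/J_1$. Combining this with the fact that $x_1,\dots,x_{d-1}$ is already a regular sequence on $R$ (available from Proposition \ref{Prop depth} (i), since $\depth R \geq d-1$) yields a regular sequence of length $d$, so $\depth R = d$ and $R$ is Cohen-Macaulay. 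The main, and rather mild, obstacle is precisely the exclusion of units from $(J_1 :_R x_d)$; everything else is a direct translation of the conditions of Theorem \ref{higher dim} into the language of the $\mm$-adic filtration.
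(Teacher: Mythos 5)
Your proof is correct and is essentially the argument the paper intends: the corollary is stated as a direct application of Theorem \ref{higher dim} to $M=R$, $\qq=\mm$, and the only content to supply is the equivalence, under $\depth R \geq d-1$, of the colon condition $(J_1 :_R x_d)\cap\mm = J_1$ with Cohen--Macaulayness, which you establish exactly as needed (the unit-exclusion step via Krull's height theorem being the one small point requiring care).
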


We extend Theorem \ref{difference dim 2} to the higher dimensions.
\begin{Theorem}\label{bound for difference}
Let $\mathbb{M} = \{ M_n \}$ be a good $\qq$-filtration of $R$-module $M$ of dimension $d \geq 2$. Suppose $J$ is an ideal of $R$ generated by a maximal $\mathbb{M}$-superficial sequence for $\qq$ such that $\depth gr_{\mathbb{N}}(M) \geq d-1$, where $\mathbb{N}$ is the $J$-adic filtration. Then, we have
$$e_2(\mathbb{M}) - e_2(\mathbb{N}) \leq \underset{n \geq 1}{\sum} n  \lambda(M_{n+1}/JM_n).$$
\end{Theorem}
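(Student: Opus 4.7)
The plan is to imitate the reduction performed in the proof of Theorem \ref{higher dim}: mod out by an initial regular subsequence of $a_1,\dots,a_d$ to descend to dimension two, then invoke Theorem \ref{difference dim 2}. Since $d=2$ is already covered, assume $d\geq 3$. Put $J_2 = (a_1,\dots,a_{d-2})$ and $K=(a_{d-1},a_d)$, so $J = J_2 + K$. Set $\overline{M}=M/J_2M$ with good $\qq$-filtration $\overline{\mathbb{M}} := \mathbb{M}/J_2M$ and $\overline{\mathbb{N}} := \mathbb{N}/J_2M$. Then $\dim\overline{M}=2$, and using $J^nM + J_2M = K^nM + J_2M$ for $n\geq 1$, one sees that $\overline{\mathbb{N}}$ is the $K$-adic filtration of $\overline{M}$. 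By \cite[Lemma 2.4]{RV1}, $a_1,\dots,a_d$ is also a maximal $\mathbb{N}$-superficial sequence for $J$, so both reductions are legal.

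Next I would propagate the hypotheses. Since $\depth gr_{\mathbb{N}}(M)\geq d-1$, Proposition \ref{Prop depth} (ii) gives that $a_1^*,\dots,a_{d-1}^*$ is a regular sequence on $gr_{\mathbb{N}}(M)$, and by Proposition \ref{induction} (v) the elements $a_1,\dots,a_{d-1}$ form a regular $M$-sequence. In particular $\depth M\geq d-1$ and $\depth \overline{M}\geq 1$. Iterated Sally's machine (Proposition \ref{Prop depth} (iv)) applied $d-2$ times yields $\depth gr_{\overline{\mathbb{N}}}(\overline{M})\geq 1$, so the hypotheses of Theorem \ref{difference dim 2} are met for $\overline{\mathbb{M}}$ and $\overline{\mathbb{N}}$.

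I then would establish the two equalities $e_2(\mathbb{M}) = e_2(\overline{\mathbb{M}})$ and $e_2(\mathbb{N}) = e_2(\overline{\mathbb{N}})$ by modding out $a_1,\dots,a_{d-2}$ one element at a time. At each intermediate step where the current dimension $d'$ satisfies $d'\geq 4$, Proposition \ref{induction} (ii) preserves $e_2$ since $2\leq d'-2$; at the last step, from dimension $3$ to dimension $2$, Proposition \ref{induction} (iii) gives $e_2(\cdot/aM)=e_2(\cdot) + \lambda(0:a)$, and the correction vanishes because $a$ is $M$-regular. Applying Theorem \ref{difference dim 2} to $\overline{\mathbb{M}},\overline{\mathbb{N}}$ with the superficial pair $(a_{d-1},a_d)$ then yields
\[
e_2(\mathbb{M}) - e_2(\mathbb{N}) = e_2(\overline{\mathbb{M}})-e_2(\overline{\mathbb{N}}) \leq \sum_{n\geq 1} n\,\lambda(\overline{M}_{n+1}/K\overline{M}_n).
\]

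To finish, I would bound the right-hand side by the desired expression. From $KM_n + J_2M = JM_n + J_2M$ and a standard isomorphism one gets
\[
\overline{M}_{n+1}/K\overline{M}_n \;\cong\; M_{n+1}/\bigl(JM_n + (J_2M\cap M_{n+1})\bigr),
\]
whose length is at most $\lambda(M_{n+1}/JM_n)$, giving the claimed inequality. The main obstacle I anticipate is cleanly handling the two regimes in the $e_2$-preservation step (parts (ii) versus (iii) of Proposition \ref{induction}) and correctly identifying $\overline{\mathbb{N}}$ with the $K$-adic filtration of $\overline{M}$; once those bookkeeping points are checked, the rest is a direct transcription of the two-dimensional argument.
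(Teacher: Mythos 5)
Your proposal is correct and follows essentially the same route as the paper: reduce to dimension two along the superficial sequence and invoke Theorem \ref{difference dim 2}, finishing with the same length bookkeeping $\lambda(\overline{M}_{n+1}/K\overline{M}_n)\leq\lambda(M_{n+1}/JM_n)$. The only (harmless) difference is how $e_2$-preservation is justified: you extract $\depth M\geq d-1$ from the hypothesis (via Valabrega--Valla) so that every correction term $\lambda(0:a)$ vanishes, whereas the paper peels off one element at a time by induction and simply observes that the corrections for $\mathbb{M}$ and $\mathbb{N}$ cancel in the difference $e_2(\mathbb{M})-e_2(\mathbb{N})$, which would work even without regularity.
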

\begin{proof} We proceed by induction on $d$. The case $d=2$ is proved by Theorem \ref{difference dim 2}. Let $d \geq 3$. By \cite[Lemma 2.4]{RV1} we can assume $J = (a_1,\cdots,a_d)$ where $a_1,\cdots,a_d$ is a maximal sequence of superficial elements for $J$ with respect to $\mathbb{M}$ and $\mathbb{N}$. By Proposition \ref{induction} we have
$$e_2(\mathbb{M}/a_1M) - e_2(\mathbb{M}) = e_2(\mathbb{N}/a_1M) - e_2(\mathbb{N}) =\left\{
                              \begin{array}{ll}
                                \lambda(0 : a_1) & \hbox{if $d = 3$;} \\
                                0 & \hbox{if $d > 3$.}
                              \end{array}
                            \right.$$
Let $K = (a_2,\cdots,a_d)$, then $K$ is generated by a maximal $\mathbb{M}/a_1M$-superficial sequence and $\mathbb{N}/a_1M$ is the $K$-adic filtration on $M/a_1M$. By Proposition \ref{Prop depth}, $\depth gr_{\mathbb{N}}(M) \geq d-1 \geq 2$ implies $\depth gr_{\mathbb{N}/a_1M}(M/a_1M) \geq d-2$. Thus by induction we get
\begin{align*}
e_2(\mathbb{M}) - e_2(\mathbb{N}) & = e_2(\mathbb{M}/a_1M) - e_2(\mathbb{N}/a_1M) \\
&\leq \underset{n \geq 1}{\sum} n \lambda\left(\frac{M_{n+1}+a_1M}{KM_n + a_1M}\right) \\
&= \underset{n \geq 1}{\sum} n \lambda\left(\frac{M_{n+1}+a_1M}{JM_n + a_1M}\right) \\
&= \underset{n \geq 1}{\sum} n \lambda\left(\frac{M_{n+1}}{JM_n + (a_1M \cap M_{n+1}) }\right) \\
&\leq \underset{n \geq 1}{\sum} n \lambda(M_{n+1}/JM_n)
\end{align*}
\end{proof}
The following result is the generalization of Theorem \ref{lower bound}.
\begin{Proposition} Let $\mathbb{M}=\{M_n\}$ be a good $\qq$-filtration of $R$-module $M$ of dimension $d\geq2$ and $\depth M \geq d-1$. Suppose $a_1,a_2,\cdots,a_d$ is a maximal $\mathbb{M}$-superficial sequence for $\qq$. Then
$$e_2(\mathbb{M})\geq -\binom{s+2}{2}\lambda\left(\frac{(a_1,\cdots,a_{d-1})M:a_d}{(a_1,\cdots,a_{d-1})M}\right).$$
where $s$ is the postulation number of the Ratliff-Rush filtration associated to $\mathbb{M}/(a_1,\cdots,a_{d-2})M$.
\end{Proposition}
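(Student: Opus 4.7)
The plan is to reduce to the two-dimensional case handled by Theorem \ref{lower bound} by quotienting out a regular sequence of length $d-2$. The case $d=2$ is Theorem \ref{lower bound} itself, so I assume $d\geq 3$.

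Since $\depth M \geq d-1$, Proposition \ref{Prop depth}(i) ensures that $a_1,\ldots,a_{d-1}$ is an $M$-regular sequence; in particular so is $a_1,\ldots,a_{d-2}$. Set $\overline{M} := M/(a_1,\ldots,a_{d-2})M$ and $\overline{\mathbb{M}} := \mathbb{M}/(a_1,\ldots,a_{d-2})M$. Then $\dim \overline{M} = 2$, $\depth \overline{M} \geq 1$, and the images of $a_{d-1}, a_d$ form a maximal $\overline{\mathbb{M}}$-superficial sequence for $\qq$. I next check that the reduction preserves $e_2$: at each of the $d-2$ quotient steps I kill an $M$-regular $\mathbb{M}$-superficial element, and Proposition \ref{induction}(ii) applied with $i=2$ gives $e_2$-equality whenever the current dimension is at least $4$, while for the single step from dimension $3$ to dimension $2$ Proposition \ref{induction}(iii) gives $e_2(\mathbb{M}/aM) = e_2(\mathbb{M}) + \lambda(0:_M a)$, and the correction vanishes because $a$ is $M$-regular. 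Consequently $e_2(\mathbb{M}) = e_2(\overline{\mathbb{M}})$.

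Applying Theorem \ref{lower bound} to $(\overline{M},\overline{\mathbb{M}})$ with the ideal $K=(a_{d-1},a_d)$, I obtain
$$e_2(\overline{\mathbb{M}}) \;\geq\; -\binom{s+2}{2}\lambda\!\left(\frac{a_{d-1}\overline{M}:_{\overline{M}} a_d}{a_{d-1}\overline{M}}\right),$$
where $s$ is the postulation number of the Ratliff-Rush filtration associated to $\overline{\mathbb{M}}$, which is precisely the $s$ of the statement.

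To conclude, it suffices to verify the length identity
$$\lambda\!\left(\frac{a_{d-1}\overline{M}:_{\overline{M}} a_d}{a_{d-1}\overline{M}}\right) = \lambda\!\left(\frac{(a_1,\ldots,a_{d-1})M:_M a_d}{(a_1,\ldots,a_{d-1})M}\right).$$
This is a direct colon computation inside $\overline{M}$: since $a_{d-1}\overline{M} = (a_1,\ldots,a_{d-1})M/(a_1,\ldots,a_{d-2})M$, the preimage in $M$ of $a_{d-1}\overline{M}:_{\overline{M}} a_d$ is exactly $(a_1,\ldots,a_{d-1})M:_M a_d$, and the third isomorphism theorem yields the required isomorphism of the two quotients. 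The only point demanding a moment's care is the preservation of $e_2$ through the drop from dimension $3$ to dimension $2$ (where Proposition \ref{induction}(ii) does not directly cover $i=2$), but as noted this is immediate from the regularity of the killed element, so no real obstacle arises.
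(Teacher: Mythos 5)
Your proposal is correct and follows essentially the same route as the paper: reduce modulo the regular sequence $a_1,\ldots,a_{d-2}$ to a two-dimensional module of positive depth, apply Theorem \ref{lower bound}, and identify $\bigl(a_{d-1}\overline{M}:_{\overline{M}}a_d\bigr)/a_{d-1}\overline{M}$ with $\bigl((a_1,\ldots,a_{d-1})M:_M a_d\bigr)/(a_1,\ldots,a_{d-1})M$. Your explicit justification of $e_2(\mathbb{M})=e_2(\overline{\mathbb{M}})$ via Proposition \ref{induction}(ii) and (iii) is a correct filling-in of a step the paper leaves implicit.
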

\begin{proof} Define $\overline{M} = M/(a_1,\cdots,a_{d-2})M$. Then
$$\overline{\mathbb{M}} = \mathbb{M}/(a_1,\cdots,a_{d-2})M = \left\{\frac{M_n + (a_1,\cdots,a_{d-2})M}{(a_1,\cdots,a_{d-2})M}\right\} $$
is a good $\qq$ filtration of the $2$-dimensional module $\overline{M}$. Since $K = (a_{d-1},a_d)$ is an ideal of $R$ generated by a maximal $\overline{\mathbb{M}}$-superficial sequence, by Theorem \ref{lower bound}, we get
$$e_2(\mathbb{M}) = e_2(\overline{\mathbb{M}}) \geq -\binom{s+2}{2}\lambda\left(\frac{a_{d-1}\overline{M}:_{\overline{M}} a_d}{a_{d-1}\overline{M}}\right),$$
where $s$ is a postulation number of the Ratliff-Rush filtration associated to $\overline{\mathbb{M}}$. Finally, the conclusion follows by the fact
$$ \frac{(a_{d-1}\overline{M}:_{\overline{M}} a_d)}{a_{d-1}\overline{M}} \cong  \frac{((a_1,\cdots,a_{d-1})M : a_d)}{(a_1,\cdots,a_{d-1})M}.$$
\end{proof}

\noindent \textbf{Acknowledgements}
I thank my advisor Maria Evelina Rossi for suggesting the problem and for providing helpful suggestions throughout the preparation of this manuscript. I am also grateful to the department of Mathematics of Genova University for supporting my PhD program.


\begin{thebibliography}{1}

\bibitem{AM}
M.F. Athiyah and I.G. Macdonald, {\em Introduction to Commutative Algebra}, Addison-Wesley (1969).

\bibitem{BH}
W. Bruns and J. Herzog, {\em Cohen-Macaulay rings}, Cambridge University Press, Cambridge (1993).

\bibitem{CZ}
T. Cortadellas and S. Zarzuela, {\em On the depth of the fiber cone of filtrations}, J. Algebra {\bf 198(2)} (1997), 428-445.

\bibitem{GN}
S. Goto and K. Nishida, {\em Hilbert coefficients and Buchsbaumness of associated graded rings}, J. Pure Appl. Algebra {\bf 181(1)} (2003), 61-74.

\bibitem{GR}
A. Guerrieri and M.E. Rossi, {\em Hilbert coefficients of Hilbert filtrations}, J. Algebra {\bf 199} (1998), no.1, 40-61.

\bibitem{H}
C. Huneke, {\em Hilbert functions and symbolic powers}, Michigan Math J. {\bf 34(2)} (1987), 293-318.

\bibitem{HM}
S. Hukaba and T. Marley, {\em Hilbert coefficients and the depths of associated graded rings}, J. London Math. Soc. {\bf 56} (1997), 64-76.

\bibitem {LM}
L. Mccune, {\em Hilbert coefficients of parameter ideals}, J. Commutative. Algebra {\bf 5} (2013), no. 3, 399-412.

\bibitem{N}
M. Narita, {\em A note on the coefficients of Hilbert characteristic functions in semi-regular local rings}, Proc. Cambridge Philos. Soc. {\bf 59} (1963), 269-275.

\bibitem{No}
D.G. Northcott, {\em A note on the coefficients of the abstract Hilbert function}, J. London Math. Soc., {\bf 35} (1960), 209-241.

\bibitem{P}
T. Puthenpurakal, {\em Hilbert-coefficients of a Cohen-Macaulay module}, J. Algebra {\bf 264(1)} (2003), 82-97.

\bibitem{PZ}
T. Puthenpurakal and Zulfeqarr, {\em Ratliff–Rush filtrations associated with ideals and modules over a Noetherian ring}, J. Algebra {\bf 311(2)} (2007), 551-583.

\bibitem{RR}
L.J. Ratliff and D. Rush, {\em Two notes on reductions of ideals}, Indiana Univ. Math. J. {\bf 27} (1978), 929-934.

\bibitem{RV1}
M.E. Rossi and G. Valla, {\em Hilbert functions of Filtered Modules}, Springer-UMI Lecture Notes in Mathematics, LN  9 (2010).

\bibitem{RV2}
M.E. Rossi and G. Valla, {\em On the Chern number of a filtration}, Rend. Sem. Mat. Univ. Padova, 121 (2009).

\bibitem{V}
G. Valla, {\em Problems and Results on Hilbert Functions of Graded Algebras}, In: Elias J., Giral J.M., Miró-Roig R.M., Zarzuela S. (eds) Six Lectures on Commutative Algebra. Progress in Mathematics. Birkhäuser, Basel (1998), 293-344.

\end{thebibliography}
\end{document}